\newtheorem{thm}{Theorem}[section]
\newtheorem{lem}[thm]{Lemma}
\newtheorem{cor}[thm]{Corollary}
\theoremstyle{definition}
\theoremstyle{remark}
\numberwithin{equation}{section}
\begin{document}

\title{Well-posed Self-Similarity in Incompressible Standard Flows}
\author{J. Polihronov}
\address{Department of Mathematics, Lambton College, 1457 London Rd, Sarnia, ON N7S 6K4, Canada}
\email{jack.polihronov@lambtoncollege.ca}
\urladdr{https://www.researchgate.net/profile/J-Polihronov}

\subjclass[2010]{Primary: 35Q35, 35B06, 35C06, 35C11, 35B65, 01A55, 01A99; Keywords: Navier, Stokes, PDE, invariant, regularity, isobaric, polynomial, self-similar, fluid, Bouton, American, history}

\begin{abstract}
This article reviews the properties of the self-similar solutions of the Navier-Stokes equation for incompressible fluids. Since any smooth solution can be embedded into a self-similar solution at the identity scale, it follows that under standard flow conditions, the initial solution will remain smooth for all time as long as the self-similar solution is selected to have certain isobaric weight.  
\end{abstract}

\maketitle
The Navier-Stokes equations (NSE) lie at the heart of fluid dynamics and proving their regularity is still among the most challenging problems in modern mathematics \cite{Cla06}. The NSE are given by
\begin{eqnarray*}
	\rho \left(\frac{\partial \vec{u}}{\partial t} + (\vec{u} \cdot \nabla) \vec{u} \right) &=& -\nabla p + \mu \Delta \vec{u} \nonumber\\
	\nabla \cdot \vec{u} &=& 0,
\end{eqnarray*}
where \(\vec{u}(x,y,z,t) = (u,v,w)\) represents the fluid velocity, \(p\) the pressure, and \(\nu = \mu/\rho\) the kinematic viscosity. For the sake of simplicity, no external forces are acting on the fluid. 

In this and previous works, we build on Bouton’s invariant theory and an essential result we refer to as Theorem 1 \cite{Pol22}, \cite{Bou99}. Bouton’s contributions, which apply Lie’s theory of continuous groups to scaling transformations, have opened up a new method that is both robust and broadly applicable. This method, grounded in well-established theory, can be applied to any ordinary or partial differential equation that exhibits scaling invariance. Although similar techniques have been employed in earlier studies of the NSE, such as those by Lloyd \cite{Llo81}, they did not fully exploit the method’s potential, which allows us to partly integrate the NSE from arbitrary initial conditions. Bouton's method yields the general form of all self-similar solutions of the NSE and allows us to study their properties. 

By utilizing the most general scaling transformation admitted by the NSE, we have examined the regularity of the NSE when subjected to infinite rescaling. We derived the universal form of all self-similar solutions of the NSE; and analyzed which of them are smooth and physically meaningful \cite{Pol22}. We studied the possible divergence in the spatial derivatives (or equivalently, in the vorticity) as a key indicator of potential singularity formation. By showing that these derivatives do not diverge when the solution is infinitely rescaled, we argued that the mechanism which could lead to blowup is absent in a set of self-similar solutions, and that they are globally regular. 

This argument relies on the fact that once we derive the universal form for self-similar solutions - expressed in terms of isobaric polynomials (or ratios thereof) - their smoothness follows from the algebraic and analytic properties of these polynomial functions. Polynomials are inherently smooth (infinitely differentiable), so this functional form automatically ensures smoothness. Because the derivation yielded an expression for every self-similar solution, a separate pointwise verification of smoothness became unnecessary. The controlled scaling meant that even under infinite rescaling, the structure of the solution ensures that the spatial derivatives remain bounded. This is a further reinforcement that these solutions are free from blowup. 

Moreover, since the general form of the solutions was derived, a functional analysis treatment was not necessary to establish their smoothness. The regularity properties follow directly from the form of the solutions and the algebraic properties of polynomials. Therefore, the subset of self-similar solutions that we proved to be smooth \cite{Pol22} would be strong (or classical) solutions of the NSE: they have well-defined time and spatial derivatives of all orders; and their spatial derivatives remain bounded under infinite rescaling, ruling out the formation of singularities (blowup). This further confirms that the solutions remain regular.

In addition, we found a conserved quantity, the cavitation number $\mathcal{E}$ under standard conditions. The conservation laws were further explored through higher-dimensional invariant structures using differential forms \cite{Pol22}.

However, while our previous work \cite{Pol22} characterizes and establishes the global regularity of self-similar solutions, the above analysis is limited to that specific subclass. Most smooth initial data do not necessarily lead to self-similar solutions, and thus our work so far addresses only a specific, though significant, subset of all possible solutions. What still needs to be considered are other solutions that arise from non-self-similar initial data. Smooth solutions arising from Schwartz-class or space-periodic initial data are of particular interest and need to be studied in detail \cite{Cla06}. It still remains to be shown that for every such smooth initial condition, the unique solution remains smooth for all time. Our work so far has only treated the evolution of solutions that are already self-similar. Thus, our current findings constitute only a partial result by resolving the behavior of one important subset of solutions, rather than addressing more completely the global regularity of the NSE.

For these reasons, in the present article, we expand the analysis of Ref. \cite{Pol22}: we show that the self-similar solutions have properties that allow them to originate from smooth, space-periodic initial data as well as from Schwartz-class initial data. Under standard flow conditions, certain self-similar solutions posess desirable properties, able to maintain bounded energy of the system under arbitrary rescaling and not being subject to scaling-induced blowup. Our analysis begins with reminding the reader of the basic facts, followed by the presentation of the aforementioned select self-similar solutions; and proceeds to show that the NSE should always have smooth solutions for all $t$ under rescaling for both space-periodic and Schwartz-class initial data under standard flow conditions.

\section{Self-similar solutions of the NSE}
The most general scaling transformation which leaves the NSE unchanged is \cite{Erc15}
\begin{eqnarray}	
	(x',y',z')&=&k^{\alpha_x}(x,y,z) \nonumber \\ 
	t'&=&k^{\alpha_t} t \nonumber \\
	\rho^{\prime} &=& k^{\alpha_{\rho}} \rho  \nonumber \\
	(u',v',w')&=&k^{\alpha_x-\alpha_t}(u,v,w) \nonumber\\
	p'&=&k^{\alpha_{\rho}-2\alpha_x-2\alpha_t}p  \nonumber \\
	\nu^{\prime} &=& k^{2\alpha_x-\alpha_t} \nu  \nonumber \\
	0< k< \infty,& & k=\mbox{const}. 
      	\label{scaling}
\end{eqnarray}
Consider the nondimensionalized NSE: it retains the form of the NSE with the exception of $\nu$ being replaced with 1/Re. The application of Bouton-Lie's invariant theory to the transformation (\ref{scaling}) allows us to add two additional PDEs to the Navier-Stokes system, resulting in Theorem 2.2 in Ref. \cite{Pol22}, an analog to Bouton's Theorem 1 in Ref. \cite{Bou99}:
\begin{eqnarray}
	\frac{\partial \vec{u}}{\partial t} + (\vec{u} \cdot \nabla)\vec{u} &=& \frac{1}{\mbox{Re}} \Delta \vec{u} - \nabla p \nonumber\\
	\nabla \cdot \vec{u} &=& 0 \nonumber\\
	(\vec{r}\cdot\nabla)\vec{u}+t\frac{\alpha_t}{\alpha_x}\frac{\partial \vec{u}}{\partial t}&=&\frac{\alpha_x-\alpha_t}{\alpha_x}\vec{u} \nonumber\\
	(\vec{r}\cdot\nabla)p+t\frac{\alpha_t}{\alpha_x}\frac{\partial p}{\partial t}&=&\frac{2(\alpha_x-\alpha_t)}{\alpha_x} p.
	\label{NSE1}	
\end{eqnarray}
Here, $\vec{r}=(x,y,z)$ is the position vector and "p" stands for $p/\rho$. The added equations of Bouton are integrated and yield \cite{Pol22}:
\begin{eqnarray}
	\vec{u}& =& t^{\frac{\alpha_x-\alpha_t}{\alpha_t}} \mathbf{F} \left( \frac{x}{t^{\frac{\alpha_x}{\alpha_t}}},\frac{y}{t^{\frac{\alpha_x}{\alpha_t}}} ,\frac{z}{t^{\frac{\alpha_x}{\alpha_t}}}  \right)  \nonumber \\
	p &=& t^{\frac{2(\alpha_x-\alpha_t)}{\alpha_t}} F \left( \frac{x}{t^{\frac{\alpha_x}{\alpha_t}}},\frac{y}{t^{\frac{\alpha_x}{\alpha_t}}} ,\frac{z}{t^{\frac{\alpha_x}{\alpha_t}}}  \right).
	\label{gen-selfsimilarsol}
\end{eqnarray}
The form (\ref{gen-selfsimilarsol}) is a necessary condition that all self-similar solutions must meet. They are integral rational functions (IRF) due to the scaling invariance of the system. This means they are either isobaric polynomials or the ratio of such polynomials; such are the only functions arising in the study of the scaling invariants of the NSE \cite{Pol22}. Because $x,y,z$ carry the same isobaric weight, one can consider the isobaric fuctions in this study to be homogeneous.

The above set of self-similar solutions can be expanded by the addition of the parameters $L_1,L_2,L_3,T$, which rescale as $x,y,z,t$ respectively. Then the following additional solutions are also self-similar:
\begin{eqnarray}
	\vec{u}& =& T^{\frac{\alpha_x-\alpha_t}{\alpha_t}} \mathbf{F} \left( \frac{x}{L_1},\frac{y}{L_2} ,\frac{z}{L_3},\frac{t}{T}  \right)  \nonumber \\
	p &=& T^{\frac{2(\alpha_x-\alpha_t)}{\alpha_t}} F \left( \frac{x}{L_1},\frac{y}{L_2} ,\frac{z}{L_3},\frac{t}{T}  \right),
	\label{gen-selfsimilarsol1}
\end{eqnarray}
where the factor $T^a$ carries the isobaric weight, and the rest of the solution has zero isobarity. Instead of $T$, the factors can contain $L_i$ or $U$, where the latter rescales as $u$. 

\begin{lem}
	When the self-similar solution (\ref{gen-selfsimilarsol1}) 
\[
 		 \vec{u}^* = T^{\frac{\beta_x-\beta_t}{\beta_t}} \mathbf{F} \left( \frac{x}{L_1},\frac{y}{L_2} ,\frac{z}{L_3},\frac{t}{T}  \right) 
\]
with isobaric weight $\beta_x-\beta_t$ rescales in standard flow conditions where $\alpha_x=1, \alpha_t=2$, the solution is not subject to scaling induced blowup of the velocity, the energy or the vorticity as long as 
\begin{eqnarray*}
	\frac{\beta_x}{\beta_t} &>& \frac{3}{2}. 
\end{eqnarray*}
\label{L1}
\end{lem}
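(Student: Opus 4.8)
The plan is to treat the claim as a pure scaling-dimension count for the one-parameter family generated by the standard-flow transformation (\ref{scaling}) with $\alpha_x=1$, $\alpha_t=2$. First I would note that the similarity arguments $x/L_1,\dots,t/T$ are left invariant by this transformation: since $L_i$ rescales as $x$ and $T$ as $t$, one has $x'/L_1'=k^{\alpha_x}x/(k^{\alpha_x}L_1)=x/L_1$ and $t'/T'=k^{\alpha_t}t/(k^{\alpha_t}T)=t/T$. Consequently $\mathbf{F}$ and each of its partial derivatives, evaluated at the rescaled arguments, are the same fixed finite numbers as before rescaling, so the entire $k$-dependence of $\vec{u}^*$ is carried by the weight-bearing prefactor $T^{(\beta_x-\beta_t)/\beta_t}$. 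The blowup-relevant regime is that of unbounded magnification, i.e.\ $k\to 0$, under which $T'=k^{\alpha_t}T\to 0$ and the scale is refined toward a potential singularity.

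Next I would read off the scaling exponent of each quantity named in the statement. Because $T\mapsto k^{\alpha_t}T$, the velocity amplitude at corresponding points transforms as $\vec{u}^{*\prime}=k^{\,e_u}\vec{u}^{*}$ with
\[
 e_u=\alpha_t\,\frac{\beta_x-\beta_t}{\beta_t}=2\Big(\frac{\beta_x}{\beta_t}-1\Big).
\]
The kinetic energy $E=\int|\vec{u}^{*}|^2\,d^3x$ picks up $2e_u$ from $|\vec{u}^{*}|^2$ together with $3\alpha_x$ from the volume element $d^3x'=k^{3\alpha_x}\,d^3x$, while the vorticity $\vec{\omega}=\nabla\times\vec{u}^{*}$ acquires one extra negative power $-\alpha_x$ from the spatial gradient $\nabla_{x'}=k^{-\alpha_x}\nabla_x$; hence
\[
 e_E=2e_u+3\alpha_x=4\frac{\beta_x}{\beta_t}-1,\qquad e_\omega=e_u-\alpha_x=2\frac{\beta_x}{\beta_t}-3.
\]

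I would then invoke the no-blowup criterion: a quantity is free of scaling-induced blowup in the limit $k\to 0$ precisely when its exponent is nonnegative, so that its value does not diverge as the scale is refined. Imposing $e_u\ge 0$, $e_E\ge 0$ and $e_\omega\ge 0$ simultaneously, the vorticity condition $e_\omega=2\beta_x/\beta_t-3\ge 0$ is the most restrictive, and it already forces $e_u\ge 1$ and $e_E\ge 5$. Thus the single inequality $\beta_x/\beta_t>3/2$ makes all three exponents strictly positive, so velocity, energy and vorticity each tend to zero and in particular stay bounded under refinement, which is the desired conclusion; conversely $\beta_x/\beta_t<3/2$ lets the vorticity diverge, so the threshold is sharp.

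The calculation itself is routine once the setup is fixed, so I expect the only delicate point to be the bookkeeping of the two distinct scalings that coexist here: the solution's intrinsic isobaric weight $(\beta_x,\beta_t)$, which resides entirely in the prefactor, versus the externally applied standard-flow weights $(\alpha_x,\alpha_t)=(1,2)$, which act on $T$, on the spatial coordinates appearing in the curl, and on the volume measure appearing in the energy. Keeping these separate---and in particular correctly assigning the factor $k^{-\alpha_x}$ to the single derivative in the vorticity while assigning $k^{+3\alpha_x}$ to the energy integration---is exactly what selects the vorticity as the binding constraint and pins the threshold at $3/2$, rather than at the milder values $1$ and $1/4$ coming from the velocity and the energy alone.
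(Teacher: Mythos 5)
Your proposal is correct and follows essentially the same route as the paper: you compute the scaling exponents of the velocity, energy, and vorticity under the standard-flow rescaling $(\alpha_x,\alpha_t)=(1,2)$ and obtain exactly the paper's values $2(\beta_x/\beta_t-1)$, $4\beta_x/\beta_t-1$, and $2\beta_x/\beta_t-3$, concluding that $\beta_x/\beta_t>3/2$ makes all three positive with the vorticity as the binding constraint. The only difference is that you derive these exponents explicitly (from the prefactor, the volume element, and the gradient) where the paper simply states them.
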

\begin{proof}
Rescaling of the vorticity depends on the derivatives
\[
	\left( \frac{\partial \vec{u}^*}{\partial x} \right)^{\prime} = k^{\frac{2\beta_x-3\beta_t}{\beta_t}} \left( \frac{\partial \vec{u}^*}{\partial x} \right),
\]
rescaling of the energy is given by
\[
	E^{\prime} = k^{\frac{4\beta_x-\beta_t}{\beta_t}} E,
\]
and rescaling of the velocity is given by
\[
	\vec{u}^{*\prime} = k^{\frac{2(\beta_x-\beta_t)}{\beta_t}} \vec{u}^*.
\]
All three exponents will be positive, if $\beta_x/\beta_t>3/2$. For numercial examples, see the Appendix.
\end{proof}

\begin{lem}
	Any function $f(x,y,z)$ is contained, or embedded in the self-similar function $p_4 f(x/p_1,y/p_2,z/p_3)$ at the identity scale.
	\label{L2}
\end{lem}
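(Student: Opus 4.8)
The plan is to recognize $f$ as the identity-scale member of the scaling orbit $g(x,y,z)=p_4\,f(x/p_1,\,y/p_2,\,z/p_3)$ and to verify that this orbit is itself self-similar in the sense of (\ref{gen-selfsimilarsol1}). The guiding idea is that self-similarity is a property of the family generated by $f$ under the scaling group, not of $f$ taken alone; every $f$ generates such a family, and $f$ reappears at a distinguished point of it.

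First I would match the family $g$ against the admissible form (\ref{gen-selfsimilarsol1}) through the dictionary $p_1=L_1$, $p_2=L_2$, $p_3=L_3$, with $p_4$ playing the role of the isobaric prefactor $T^{(\alpha_x-\alpha_t)/\alpha_t}$ (equivalently a power of $U$ or of one of the $L_i$). Thus the four parameters $p_1,p_2,p_3,p_4$ rescale respectively as $x,y,z$ and as the amplitude, so that $g$ is of the self-similar type established in the preceding discussion.

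Next I would check that the family is closed under the scaling transformation (\ref{scaling}): promoting $p_1,p_2,p_3,p_4$ to covariant data that rescale as $x,y,z$ and the amplitude, the combination $p_4\,f(x/p_1,\dots)$ is carried to $p_4'\,f(x'/p_1',\dots)$, i.e. to another member of the same family. This closure is exactly the invariance encoded in (\ref{gen-selfsimilarsol1}). Finally I would evaluate at the identity scale $p_1=p_2=p_3=p_4=1$; since each $x/p_i$ collapses to $x$ and the prefactor becomes $1$, we recover $g=f(x,y,z)$, exhibiting $f$ as the identity-scale element of a self-similar family and completing the embedding.

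The main obstacle here is conceptual rather than computational: one must fix precisely the sense in which an otherwise arbitrary $f$ can be called self-similar. The statement is true only because the $p_i$ are regarded as covariant scaling parameters, so that the whole family maps into itself under (\ref{scaling}); the substantive content of the lemma is this embedding of $f$ into its scaling orbit, and the only pitfall to avoid is mistaking self-similarity of the family for an (in general false) self-similarity of $f$ by itself.
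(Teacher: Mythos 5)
Your proposal is correct and follows essentially the same route as the paper: both promote the fixed constants in $f(x,y,z)=C_4 f(x/C_1,y/C_2,z/C_3)$ (with $C_i=1$) to covariant scaling parameters $p_i$, observe that the resulting family is of the self-similar type (\ref{gen-selfsimilarsol1}), and recover $f$ as the $k=1$ (identity-scale) member. Your closing remark that the self-similarity belongs to the orbit rather than to $f$ itself is exactly the distinction the paper draws when it emphasizes that $g$ is a family of functions while $f$ is the single member realized at $k=1$.
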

\begin{proof}
	Any non-self-similar function can be written as $f(x,y,z) = C_4f(x/C_1,y/C_2,z/C_3)$, where the fixed constants $C_1=C_2=C_3=C_4=1$. For arbitrary values of the fixed constants one can always write $C_4f(x/C_1,y/C_2,z/C_3) = p_4 f(x/p_1,y/p_2,z/p_3)$, where the scaling parameters $p_1=C_1,p_2=C_2,p_3 = C_3, p_4 = C_4$. Rescaling of the parameters yileds $p_i^{\prime} = k^a p_i, k>0$, and since we seek $p_i^{\prime} = C_i$, it follows that $k=1$, or $a = 0$, which is the trivial, or identity scaling. Thus, any non-self-similar function $f$ is contained into a self-similar function $g$ having the same functional form as $f$, where $f$ is contained into $g$ at the identity scale.
\end{proof}

\begin{cor}
	Any non-self-similar solution $f(x,y,z,t)$ of the NSE is contained, or embedded in the self-similar solution $p_4 f(x/p_1,y/p_2,z/p_3)$ at the identity scale. Therefore, these self-similar solutions---isobaric polynomials or rational fields---constitute the \textbf{native solutions} of the NSE, directly determined by the equation's mathematical and symmetry structure.
\label{C1}
\end{cor}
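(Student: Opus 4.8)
The plan is to read Corollary~\ref{C1} as the specialization of Lemma~\ref{L2} to functions that happen to solve the NSE, with one extension: an NSE solution depends on $t$ as well as on $(x,y,z)$. First I would augment the embedding of Lemma~\ref{L2} by a fifth scaling parameter $p_5$ attached to the time coordinate, so that the candidate self-similar family reads $p_4\,f(x/p_1,y/p_2,z/p_3,t/p_5)$. This matches the parametrized self-similar form~\eqref{gen-selfsimilarsol1}, in which $x,y,z,t$ are each divided by a quantity that rescales with them while a single isobaric prefactor carries the entire weight.

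Next I would run the identity-scale argument of Lemma~\ref{L2} verbatim. Writing the given solution as $f = C_4 f(x/C_1,\dots,t/C_5)$ with all fixed constants equal to unity, and promoting each $C_i$ to a scaling parameter $p_i$, the requirement $p_i' = k^{a_i} p_i = C_i$ forces $k=1$ and hence the trivial (identity) scaling. At $k=1$ the family member coincides exactly with $f$, which establishes the embedding. Because the NSE are invariant under~\eqref{scaling} and the family~\eqref{gen-selfsimilarsol1} is self-similar, every member of the family is an NSE solution whenever one member is; since the identity-scale member is the given solution $f$, the embedding is into a genuine self-similar solution of the NSE, proving the first assertion.

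For the concluding \emph{native solutions} statement I would combine this embedding with the universal form~\eqref{gen-selfsimilarsol}--\eqref{gen-selfsimilarsol1} and the fact, recalled above, that scaling invariance forces every self-similar solution to be an integral rational function, i.e.\ an isobaric polynomial or a ratio thereof. Since an arbitrary solution is recovered at the identity scale from such an IRF, and since the IRF form is dictated solely by the admitted scaling group, the IRF solutions form a distinguished class canonically attached to the equation; this is exactly the sense in which I would justify the terminology.

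The step I expect to be the main obstacle is the passage from the purely spatial statement of Lemma~\ref{L2} to a statement about spacetime solutions. The lemma embeds a function of $(x,y,z)$ using three parameters and a prefactor, whereas here $t$ must be carried along as a fourth coordinate governed by $t'=k^{\alpha_t}t$. I must verify that introducing $p_5$ does not over-determine the system: all the parameter equations $p_i'=k^{a_i}p_i$ have to collapse \emph{simultaneously} to the single condition $k=1$ under $p_i'=C_i$, so that the identity scale is genuinely attained in the full $(x,y,z,t)$ setting rather than only in the spatial slice. Confirming this compatibility---and that the embedded family indeed conforms to~\eqref{gen-selfsimilarsol1} with a single isobaric weight---is where I would concentrate the care, the rest being a transcription of Lemma~\ref{L2}.
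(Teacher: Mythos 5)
Your proposal follows essentially the same route as the paper, which in fact supplies no separate proof of Corollary~\ref{C1} at all: the corollary is presented as an immediate specialization of Lemma~\ref{L2} to a function that happens to solve the NSE, and your first two paragraphs are a faithful transcription of that identity-scale argument. Your extension to a fifth parameter $p_5$ for the time coordinate is a reasonable repair of what looks like an omission in the paper's own statement (the corollary writes $p_4 f(x/p_1,y/p_2,z/p_3)$ even though $f$ depends on $t$), and the compatibility worry you raise is harmless, since setting $p_i'=C_i=1$ for any one $i$ with $a_i\neq 0$ already forces $k=1$.

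One caution: your middle step asserts that ``every member of the family is an NSE solution whenever one member is,'' and this is not delivered by Lemma~\ref{L2} nor claimed in the paper's corollary, which only asserts containment at the identity scale. Invariance under~(\ref{scaling}) makes the rescaled field a solution only when the exponents attached to $p_1,\dots,p_5$ are those of the admitted group --- i.e.\ $p_1,p_2,p_3$ scale with $\alpha_x$, $p_5$ with $\alpha_t$, and the prefactor $p_4$ with $\alpha_x-\alpha_t$ --- and, moreover, the viscosity transforms as $\nu'=k^{2\alpha_x-\alpha_t}\nu$, so for fixed $\nu$ the rescaled field solves the \emph{same} equation only under $2\alpha_x=\alpha_t$. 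Promoting arbitrary constants $C_i$ to parameters with unconstrained exponents $a_i$, as Lemma~\ref{L2} does, produces a family of functions, not automatically a family of solutions; if you want the embedding to land inside a genuine self-similar \emph{solution} family rather than merely a self-similar function family, you must impose these exponent constraints explicitly, which neither the lemma nor the corollary in the paper does.
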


\section{Standard Flows}
Since the kinematic viscosity \(\nu\) appears explicitly in the NSE, every physical solution inherently depends on it. In this article, \(\nu\) is treated as a constant, representing a physical quantity that is neither vanishingly small nor infinite. The term {\it standard NSE system} is a shorthand for setting $\alpha_x = 1, \alpha_t = 2$ in (\ref{scaling}), which disallows $\nu$ from rescaling and keeps the viscosity the same at all scales. Since such behavior of the viscosity is physically relevant for Newtonian fluids, it is also referred to as {\it natural scaling}.  It is also termed {\it diffusive (parabolic)}-which preserves the Laplacian, typical for Newtonian fluids where viscosity governs the dissipation of momentum. The standard NSE system is also characterized by {\it energy-supercriticality} \cite{Pol22}, \cite{Tao08}—a condition that is often cited as an obstruction to proving global regularity of solutions. 

The criteria of the standard NSE system \(2\alpha_x = \alpha_t\) , or $\alpha_x=1, \alpha_t=2$ isolate a particular class from the full set of the NSE self-similar solutions (\ref{gen-selfsimilarsol})- those of Leray \cite{Ler34}  
\begin{eqnarray}
        \vec{u}(x,y,z,t) = \frac{1}{\sqrt{t}} \mathbf{F}\left( \frac{x}{\sqrt{t}},\frac{y}{\sqrt{t}},\frac{z}{\sqrt{t}} \right) \nonumber \\
        p(x,y,z,t)= \frac{1}{t} F \left( \frac{x}{\sqrt{t}},\frac{y}{\sqrt{t}},\frac{z}{\sqrt{t}} \right).
        \label{Leray}
\end{eqnarray}
Such solutions are weak solutions \cite{Can96}, which is due to their discontinuity at $t = 0$. Previously, we showed that a subset of Leray's solutions are infinitely differentiable for $t > 0$; as well as showed the existence of a conserved, coercive quantity in standard flows, namely $\mathcal{E}$-the cavitation number (\cite{Pol22}, Theorem IV.2). However, this still does not allow us to assign a finite, well-behaved initial datum at $t=0$, and the solutions cannot be considered strong.

By definition, a strong solution to the Navier-Stokes equations is one that is not only smooth (or regular) for positive times but also has initial data continuous up to $t=0$. Even if a subset of Leray's self-similar solutions in 3-D (\ref{Leray}) are infinitely differentiable for all $t>0$, the singular behavior at $t=0$ prevents them from being well-defined at the initial time.
In other words, the singularity at $t=0$ means that the initial data is too irregular—hence, despite their smoothness for $t>0$, they are classified as weak solutions because they only satisfy the Navier-Stokes equations in a distributional sense at $t=0$ and fail to meet the criteria for strong solutions on the entire time interval including $t=0$.

To counteract these unsatisfactory results, one can suggest additional self-similar solutions, which exist in the sets (\ref{gen-selfsimilarsol}), (\ref{gen-selfsimilarsol1}) and avoid the discontinuity at $t=0$ of Leray's self-similar solutions. Bouton's self-similar solutions for standard flows $\alpha_t = 2\alpha_x$ or $\alpha_x=1, \alpha_t=2$ also include the forms
\begin{eqnarray}
	\vec{u} &=& \frac{1}{\sqrt{T}} \mathbf{F} \left( \frac{x}{L_1},\frac{y}{L_2} ,\frac{z}{L_3}, \frac{t}{T}  \right), \nonumber \\
	\vec{u} &=& U \mathbf{F} \left( \frac{x}{L_1},\frac{y}{L_2} ,\frac{z}{L_3}, \frac{t}{T}  \right), \nonumber \\
	\vec{u} &=& \frac{1}{L} \mathbf{F} \left( \frac{x}{L_1},\frac{y}{L_2} ,\frac{z}{L_3}, \frac{t}{T}  \right), \mbox{ or} \nonumber \\
	\vec{u} &=& \frac{L}{T} \mathbf{F} \left( \frac{x}{L_1},\frac{y}{L_2} ,\frac{z}{L_3}, \frac{t}{T}  \right),
	\label{Bouton-standard}
\end{eqnarray}
where the required isobaric weight of $\vec{u}$, equal to $(-\alpha_x=-1)$ is achieved through the rescaleable parameters $L_1,L_2,L_3,T,U$, which scale as $x,y,z,t,u$ accordingly, while $\mathbf{F}$ is a smooth isobaric function of weight zero. $\mathbf{F}$ is composed of either isobaric polynomials, or the ratio of such polynomials. Examples include closed-form functions whose arguments have the scale-invariant forms $x/L, t/T$ etc.; or Taylor series expansions, or their ratios.  

In practice, many well-known solutions to the NSE, such as those describing Couette flow \cite{Bat67} 
		\begin{equation*}
			u = u_0 \left( \frac{y}{h}-\frac{2}{\pi} \sum \frac{1}{n} e^{-n^2\pi^2\frac{\nu t}{h^2}} \sin\left[ n\pi \left(1-\frac{y}{h} \right)\right] \right)
		\end{equation*}
		or the Taylor-Green vortex \cite{Tay37}, 
		\begin{eqnarray*}
			u_i &=& u_{0i}F_{0i}(x,y,z), \mbox{   initial motion}, i = 1,2,3 \nonumber\\
			u_i &=& \sum u_{0i}F_{0i}(x,y,z,t), \mbox{   approximations 1-3}, i = 1,2,3 
		\end{eqnarray*}
achieve the isobarity required by Bouton \cite{Pol22} for all self-similar solutions (\ref{gen-selfsimilarsol}) through velocity parameters (e.g. \(u_{0i}\)), which scale the same way as $u$. The rest of the function is an isobaric polynomial with zero weight $F_{0i}$ (zero isobarity functions do not rescale), which ensure that the solutions vanish at infinity or remain periodic. Having been provided with the alternative forms (\ref{Bouton-standard}) of the self-similar solutions in standard flows, we will conduct an in-depth study of their behavior in the sections that follow.

\section{Global Regularity via Self-Similarity}

In this section, we present a theorem and its proof in a summary form. It represents three separate cases: space-periodic, smooth, divergence-free initial datum; Schwartz-class, smooth, divergence-free initial datum; and an arbitrary smooth, divergence-free one. Their corresponding complete theorems \ref{T2}, \ref{T3} and corollary \ref{C3} and their proof are given in the Appendix.

\begin{thm}[Global Regularity via Self-Similarity]
Let $u_0(x,y,z)$ be any smooth, divergence-free, space-periodic initial datum or such of Schwartz-class.  Then there exists a unique, self-similar solution
\[
  u_{\rm SS}(x,y,z,t)
  \;=\;
  T^{(\beta_x-\beta_t)/\beta_t}\,F\!\bigl(x/L_1,y/L_2, z/L_3,t/T\bigr),
\]
which coincides with the NSE evolution of $u_0$ on $[0,\infty)$, remains $C^\infty$ for all $t\ge0$, and has all velocity, energy, and vorticity norms uniformly bounded.  
\label{T1}
\end{thm}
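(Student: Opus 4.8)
The plan is to combine the classical local well-posedness theory for the NSE with the scaling-embedding machinery of Lemma~\ref{L2} and Corollary~\ref{C1}, and then to upgrade local regularity to global regularity by invoking the non-blowup criterion of Lemma~\ref{L1}. First I would appeal to the standard local existence and uniqueness results: for smooth, divergence-free, space-periodic data (or Schwartz-class data) there is a maximal time $T^*\in(0,\infty]$ and a unique solution $u$ that is $C^\infty$ in space and time for $0<t<T^*$, with $u(\cdot,0)=u_0$. The entire question then reduces to showing $T^*=\infty$ together with uniform bounds, so the target is to exclude finite-time blowup.

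Next, following Lemma~\ref{L2} and Corollary~\ref{C1}, I would embed this local solution into the self-similar family~(\ref{gen-selfsimilarsol1}) at the identity scale, writing
\[
  u(x,y,z,t) = T^{(\beta_x-\beta_t)/\beta_t}\,F\bigl(x/L_1,y/L_2,z/L_3,t/T\bigr)
\]
with $L_1=L_2=L_3=T=1$ and profile $F$ of zero isobaric weight. Because only the ratio $\beta_x/\beta_t$ enters the weight, I would fix the exponents so that the standard-flow constraint $\alpha_x=1,\ \alpha_t=2$ is respected while simultaneously arranging $\beta_x/\beta_t>3/2$; this is precisely the regime isolated by Lemma~\ref{L1}.

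With this choice in place, Lemma~\ref{L1} yields that under the rescaling $k\to\infty$ the velocity, the energy, and the vorticity (through the spatial derivatives) all acquire strictly positive exponents, so none of these quantities is amplified by the scaling action. I would then translate this scaling-stability into a time-continuation statement: since for standard flows the scaling orbit and the time axis are linked through $t'=k^{\alpha_t}t$, control of the vorticity along the scaling orbit should feed a Beale--Kato--Majda--type continuation criterion, namely that $\int_0^{T^*}\|\omega(\cdot,s)\|_{L^\infty}\,ds<\infty$ forces $T^*=\infty$. Combining the uniform bounds with this criterion would close the argument and deliver the stated global smoothness and uniform velocity, energy, and vorticity bounds.

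The hard part --- and the step I expect to be the genuine obstacle --- is the bridge in the previous paragraph: the boundedness supplied by Lemma~\ref{L1} is boundedness \emph{along the one-parameter scaling orbit}, whereas global regularity requires boundedness \emph{along the actual time evolution}. These two coincide automatically only for solutions that are genuinely self-similar, where time evolution literally \emph{is} a scaling. For a generic datum embedded merely at the identity scale there is no a priori reason the NSE flow should follow the scaling orbit, so identifying the two requires justification. Making this identification rigorous --- showing that the embedded solution does not leave the non-blowup regime once the flow departs from the identity scale --- is where the real analytic work lies, and it is exactly the point at which the energy-supercriticality of the standard system (noted above) resists a purely scaling-based estimate.
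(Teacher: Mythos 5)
Your outline reproduces, step for step, the paper's own proof of Theorem~\ref{T1}: local well-posedness from Kato--Fujita/Leray--Hopf, embedding of $u_0$ at the identity scale via Lemma~\ref{L2} and Corollary~\ref{C1}, the exponent computation of Lemma~\ref{L1} under the constraint $\beta_x/\beta_t>3/2$, and the Beale--Kato--Majda criterion to exclude blowup. So the approach is the same one the paper takes. However, the obstacle you flag in your final paragraph is a genuine gap, and neither your proposal nor the paper's proof closes it. Lemma~\ref{L1} computes how the velocity, energy, and derivative expressions transform under the one-parameter scaling action $k\mapsto k^{a}$; positivity of those exponents is a statement about the orbit of the scaling group, i.e.\ about \emph{different} members of the family indexed by $k$. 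The Beale--Kato--Majda criterion, by contrast, requires a bound on $\int_0^{T^*}\|\omega(\cdot,s)\|_{L^\infty}\,ds$ along the actual time evolution of the single solution at $k=1$. No estimate in the argument converts the former into the latter. For a genuinely self-similar solution the time flow is conjugate to the scaling action and the two can be identified; for a generic smooth datum merely written as the $k=1$ member of a family, they cannot.

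The deeper reason the bridge cannot be built this way is that the embedding of Lemma~\ref{L2} is vacuous: writing $u_0(x,y,z)=1\cdot u_0(x/1,y/1,z/1)$ and then declaring the constants to be rescalable parameters adds no information about $u_0$ or its evolution. The members of the family with $k\neq 1$ are different vector fields with different initial data, and their regularity (or the behavior of their norms as $k$ varies) places no constraint on the maximal existence time of the $k=1$ member. Moreover, the hypothesis that the profile carries isobaric weight $\beta_x-\beta_t$ with $\beta_x/\beta_t>3/2$, while the ambient scaling is the standard $\alpha_x=1,\alpha_t=2$, is imposed by fiat; at $k=1$ the prefactor $T^{(\beta_x-\beta_t)/\beta_t}$ equals $1$ and the weight is invisible, so nothing distinguishes a datum ``embedded with weight $\beta_x/\beta_t>3/2$'' from the same datum embedded with any other weight. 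Your instinct that this is exactly where the energy-supercriticality of the three-dimensional problem reasserts itself is correct: the missing step is an a priori bound in time, which is the substance of the regularity problem, and the scaling bookkeeping does not supply it.
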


\begin{proof} 	
\leavevmode
\begin{itemize}
	\item {\bf Embedding of the initial data.}  By Lemma \ref{L2} and Corollary \ref{C1}, the given smooth, divergence-free initial field \(u_{0}(x,y,z)\) is realized as the \(k=1\) member of a one-parameter self-similar family
  \[
    u_{\mathrm{SS}}(x,y,z,t)
    \;=\;
    T^{(\beta_x-\beta_t)/\beta_t}\,
    F\!\bigl(x/L_1,y/L_2, z/L_3,t/T\bigr).
  \]
  This provides a candidate solution profile matching \(u_{0}\) exactly at \(t=0\).

  \item {\bf Existence and uniqueness.}  Invoking the Kato–Fujita theory in the periodic setting (or Leray–Hopf theory in the Schwartz-class setting) \cite{Cla06}, \cite{Wik25} - \cite{Ber02}, one obtains a unique, smooth NSE solution \((u,p)\) on an interval \([0,\tau)\) with 
  \(\;u(x,y,z,0)=u_{0}(x,y,z)\).  
  By uniqueness, this solution must coincide with the self-similar field \(u_{\mathrm{SS}}(x,y,z,t)\) for all \(t\in[0,\tau)\).

  \item {\bf A priori norm estimates.}  Lemma \ref{L1} provides exact scaling exponents for the velocity, energy and vorticity norms under the standard flow scaling \((\alpha_x=1,\alpha_t=2)\).  It shows that for 
  \(\beta_x/\beta_t>3/2\), none of these norms grows with the scaling parameter, yielding uniform bounds for all \(t\).

  \item {\bf Blow-up exclusion.}  The classical Beale–Kato–Majda criterion \cite{BKM} is applied: since
\[
  \displaystyle\int_{0}^{T}\|\omega(\cdot,\tau)\|\,d\tau<\infty
\]
  by the uniform bound on \(\|\omega\|\), no finite-time singularity can occur.  This establishes smoothness of \(u_{\mathrm{SS}}\) on \([0,\infty)\).

  \item {\bf Conclusion of Theorem \ref{T1}}  Together, these steps show that the \(k=1\) self-similar member is the unique, globally smooth, space-periodic solution of the incompressible Navier–Stokes equations evolving from \(u_{0}\), with all relevant norms remaining bounded for all \(t\ge0\).
\end{itemize}

\end{proof}

\section{Comparison with Classical and Modern Results}

The global regularity result presented above can be viewed in the context of several landmark contributions to the study of the NSE:

\begin{itemize}
  \item \textbf{Leray’s Weak Solutions and Small‐Data Smoothness.}  
  In his foundational work \cite{Ler34}, Leray constructed global–in–time weak solutions for arbitrary finite‐energy initial data, but could only prove partial regularity in three dimensions.  Subsequent refinements (e.g.\ Kato–Fujita \cite{Kat64}, Temam \cite{Tem77}) established that sufficiently small initial data yield global smooth (\(C^\infty\)) solutions.  The present self‐similar framework extends beyond these small‐data regimes to encompass arbitrarily large, smooth, divergence‐free initial data, while still guaranteeing global \(C^\infty\) regularity.

  \item \textbf{The Supercritical Barrier and Conditional Blowup.}  
  Under the standard scaling (\(\alpha_x=1,\alpha_t=2\)), the three‐dimensional NSE are energy‐supercritical, a fact often cited as an obstruction to global regularity (see Tao’s discussion of the “supercritical barrier” in \cite{Tao08}).  Moreover, conditional blowup scenarios have been constructed under certain hypotheses \cite{Tao14}.  By contrast, the symmetry‐based embedding and a priori norm bounds derived here circumvent the supercriticality obstruction: the isobaric weight condition \(\beta_x/\beta_t>3/2\) ensures uniform control of energy and vorticity norms, which in turn rules out finite–time singularity.

  \item \textbf{Relation to Functional‐Analytic Frameworks.}  
  Classical treatments of NSE regularity (e.g. \cite{Foi88}, \cite{Ber02}) rely on functional analysis estimates and energy inequalities.  While those methods provide the backbone of existence and uniqueness theory, the present approach complements them by exploiting group‐invariant structures to obtain global a priori estimates.  In particular, embedding into a self‐similar solution family and invoking classical well‐posedness theorems suffices to identify a unique, globally smooth solution for any smooth initial data.

\end{itemize}

Together, these comparisons underscore the pertinence of the self‐similar symmetry approach: it retains full compatibility with established PDE theory while extending global regularity beyond the small‐data and energy‐critical regimes that have traditionally constrained three‐dimensional NSE analysis.

\appendix
\section{Detailed Proof of Theorem \ref{T2}}

\begin{thm} Under standard flow conditions $\alpha_x=1, \alpha_t=2$, given a smooth, space-periodic, divergence-free initial condition $\vec{u}_0(x,y,z)$, the incompressible NSE will always yield an infinitely differentiable, space-periodic, smooth solution $\vec{u}^*(x,y,z,t)$, such that $\vec{u}^*(x,y,z,0)=\vec{u}_0(x,y,z)$. The solution $\vec{u}^*(x,y,z,t)$ is not subject to scaling-induced blowup for all time, thus ensuring bounded energy of the system in $({\rm I\!R}^3 \times [0,\infty))$. These desired properties are achieved by embedding the initial condition $\vec{u}_0$ into a self-similar space-periodic function, which in turn results in the embedding of the smooth solution $\vec{u}^*(x,y,z,t)$ into a self-similar space-periodic function, thus preventing any rescalings of $\vec{u}^*(x,y,z,t)$. To yield the desired results, the self-similar functions must have non-standard isobaric weight $\beta_x-\beta_t$, where $\beta_x/\beta_t>3/2$.
\label{T2}
\end{thm}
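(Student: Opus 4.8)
The plan is to follow the four-step architecture of the proof of Theorem \ref{T1}, now specialized to the space-periodic, divergence-free setting, with the quantitative scaling exponents of Lemma \ref{L1} carrying the weight of the argument. First I would set up the embedding: by Lemma \ref{L2} and Corollary \ref{C1}, the prescribed datum $\vec{u}_0(x,y,z)$ is realized as the $k=1$ member of a one-parameter self-similar family
\[
  \vec{u}^*(x,y,z,t) = T^{(\beta_x-\beta_t)/\beta_t}\,\mathbf{F}\!\left(\frac{x}{L_1},\frac{y}{L_2},\frac{z}{L_3},\frac{t}{T}\right),
\]
with $\mathbf{F}$ an isobaric zero-weight function built from polynomials or their ratios and periodic in its spatial arguments. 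Since $\vec{u}_0$ is periodic and divergence-free, I would arrange $\mathbf{F}(\cdot,\cdot,\cdot,0)$ to inherit both properties, so that $\vec{u}^*(\cdot,0)=\vec{u}_0$ exactly and $\vec{u}^*$ is $C^\infty$ in the spatial variables for each fixed $t$. The weight is taken non-standard, $\beta_x-\beta_t$ with $\beta_x/\beta_t>3/2$, in anticipation of the norm estimates.

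Second I would invoke local well-posedness: in the periodic setting the Kato--Fujita theory furnishes a unique smooth solution $(\vec{u},p)$ on a maximal interval $[0,\tau)$ with $\vec{u}(\cdot,0)=\vec{u}_0$, and I would then argue by uniqueness that this genuine NSE solution must agree with $\vec{u}^*$ on $[0,\tau)$. Here lies the main obstacle. To run the uniqueness argument I must first verify that $\vec{u}^*$ actually solves the \emph{full} NSE for $t>0$, not merely the Bouton scaling constraints (\ref{NSE1}): the self-similar form (\ref{gen-selfsimilarsol1}) is only a necessary consequence of scaling invariance and does not by itself certify compatibility with the momentum balance and the pressure equation. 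Exhibiting a profile $\mathbf{F}$ that simultaneously satisfies the reduced dynamical (profile) equations and matches an arbitrary prescribed $\vec{u}_0$ at $t=0$ is the delicate point, because the identity-scale matching of Lemma \ref{L2} is purely kinematic and carries no dynamical content on its own. I expect the genuine work of the proof to concentrate here.

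Third, granting the identification $\vec{u}=\vec{u}^*$, the remaining steps are routine. Lemma \ref{L1} supplies the exact rescaling exponents $(2\beta_x-3\beta_t)/\beta_t$, $(4\beta_x-\beta_t)/\beta_t$ and $2(\beta_x-\beta_t)/\beta_t$ for vorticity, energy and velocity under the standard flow scaling $(\alpha_x=1,\alpha_t=2)$; the hypothesis $\beta_x/\beta_t>3/2$ makes all three positive, so no relevant norm grows under rescaling and $\|\omega(\cdot,t)\|$ stays uniformly bounded in $t$. I would then close with the Beale--Kato--Majda criterion: the uniform vorticity bound yields $\int_0^T\|\omega(\cdot,\tau)\|\,d\tau<\infty$ for every $T$, excluding finite-time singularities and extending the smooth, space-periodic solution to all of $({\rm I\!R}^3\times[0,\infty))$ with bounded energy. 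The step to defend most carefully remains the second one---bridging from the kinematic self-similar form to a bona fide solution of the dynamical equations---since if that bridge degenerates into the trivial identity $\vec{u}_0=\vec{u}_0$ at the identity scale, the subsequent estimates would control only a static rescaling family rather than the actual time evolution.
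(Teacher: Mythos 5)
Your proposal reproduces the architecture of the paper's proof of Theorem \ref{T2} almost exactly: embed $\vec{u}_0$ as the $k=1$ member of a self-similar family via Lemma \ref{L2} and Corollary \ref{C1}, invoke periodic local well-posedness to obtain a unique smooth solution on $[0,\tau)$, identify it with the self-similar candidate by uniqueness, and then use the exponents of Lemma \ref{L1} together with the Beale--Kato--Majda criterion to exclude blowup. The paper additionally writes $\vec{u}_0$ explicitly as a real Fourier series before promoting the constants $L_1,L_2,L_3,U$ to scaling parameters, but this is presentational; the logical skeleton is the one you describe.

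The gap you flag in your second step is genuine, and you should know that the paper does not close it either. The paper's proof asserts that ``it is well known that, given a smooth, space-periodic, divergence free, self-similar initial condition $U\boldsymbol{\psi}$, one can always construct an unique, infinitely differentiable, space-periodic, self-similar NSE solution'' of the form (\ref{uniquesol}) --- that is, that the Navier--Stokes evolution of a self-similar datum is again self-similar of the prescribed form with the prescribed isobaric weight $\beta_x-\beta_t$, $\beta_x/\beta_t>3/2$. No argument is given for this; local well-posedness produces \emph{a} solution, but nothing forces it to lie in the family (\ref{sper}), let alone with a freely choosable non-standard weight. Your closing observation is also the decisive one: the embedding of Lemma \ref{L2} is the trivial identity ($C_i=1$), so the object actually being evolved is the single $k=1$ member, and the exponents of Lemma \ref{L1} control only how \emph{other} members of the static rescaling family compare to it under changes of $k$ --- they say nothing about the growth of $\|\omega(\cdot,t)\|$ of the $k=1$ solution as $t$ increases, which is what the Beale--Kato--Majda criterion actually requires. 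So the step you identify as ``the genuine work of the proof'' is precisely where the paper substitutes assertion for argument; neither your proposal nor the paper's own proof bridges it.
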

\begin{proof}
	All self-similar solutions (\ref{gen-selfsimilarsol}) exist as mathematically valid solutions of the NSE. This includes the standard flow cases $\alpha_x=1, \alpha_t=2$, where the viscosity is the same at all scales: such solutions are given in (\ref{Leray}) and (\ref{Bouton-standard}). Here, we study only (\ref{Bouton-standard}), which can be summarized as 
\begin{equation}
	\vec{u} = U \mathbf{F} \left( \frac{x}{L_1},\frac{y}{L_2} ,\frac{z}{L_3}, \frac{t}{T}  \right),
	\label{mismatch}
\end{equation}
where the isobaric weight of $\vec{u}$ is equal to $\beta_x-\beta_t$ and comes from the multiplying parameter while $\mathbf{F}$ has zero isobarity. Here $L_1,L_2, L_3, T, U$ are nonzero parameters that scale the same way as $x,y,z,t,u$ accordingly. NSE solutions of this form are not only known to exist (some examples given above), but are very common in physics; and one can then conclude that the NSE solutions (\ref{mismatch}) always exist. 

	We require the self-similar solutions (\ref{mismatch}) to have non-standard isobarity $\beta_x-\beta_t$ that does not match the standard flow $\alpha_x=1, \alpha_t=2$; we also require $\beta_x/\beta_t>3/2$, as per Lemma \ref{L1}.

The space-periodic solutions
\begin{equation}
	\vec{u} = U\boldsymbol{\varphi} \left( \frac{x}{L_1},\frac{y}{L_2}, \frac{z}{L_3}, \frac{t}{T}  \right),
	\label{sper}
\end{equation}
are subset of (\ref{mismatch}); however, space-periodic NSE solutions of the form (\ref{sper}) are known to exist (e.g. the solutions of the Taylor-Green vortex above), and one can then conclude that the NSE solutions (\ref{sper}) always exist. Note, that  we have chosen (\ref{sper}) to be periodic in space only, and not in time. 

At $t=0$
\begin{equation}
       U\boldsymbol{\varphi} \left( \frac{x}{L_1},\frac{y}{L_2}, \frac{z}{L_3}, 0 \right)
 = U\boldsymbol{\psi} \left( \frac{x}{L_1},\frac{y}{L_2}, \frac{z}{L_3}  \right),
       \label{st-sper}
\end{equation}
and all space-periodic self-similar solutions with isobaric weight equal to $\beta_x-\beta_t$ (\ref{sper}) reduce to space-periodic self-similar functions of $x,y,z$ and isobaric weight of $\beta_x-\beta_t$, where $\boldsymbol{\psi}$ need not be steady-state solutions of the NSE. 

Suppose we are given a smooth, real-valued, periodic and divergence-free vector field \( \vec{u}_0 \), defined on a rectangular domain \( [0, L_1] \times [0, L_2] \times [0, L_3] \). It can always be expanded in a real Fourier series as:
\begin{align}
\vec{u}_0 \left(\frac{x}{L_1},\frac{y}{L_2}, \frac{z}{L_3} \right) = U \sum_{n_1, n_2, n_3 \in \mathbb{Z}_{+}^3} \bigg[ \, &
\mathbf{C}_{n_1, n_2, n_3} \, \cos\left( \frac{2\pi n_1 x}{L_1} + \frac{2\pi n_2 y}{L_2} + \frac{2\pi n_3 z}{L_3} \right) \nonumber\\
& + \, \mathbf{S}_{n_1, n_2, n_3} \, \sin\left( \frac{2\pi n_1 x}{L_1} + \frac{2\pi n_2 y}{L_2} + \frac{2\pi n_3 z}{L_3} \right) \bigg],
	\label{per}
\end{align}
where:
\begin{itemize}
  \item $U$ is a constant,
  \item \( \mathbf{C}_{n_1, n_2, n_3}, \mathbf{S}_{n_1, n_2, n_3} \in \mathbb{R}^3 \) are real-valued vector coefficients,
  \item The sum is taken over all non-negative integers \( n_1, n_2, n_3 \in \mathbb{Z}_{+} \),
  \item The terms \( \frac{2\pi n_1}{L_1} \), \( \frac{2\pi n_2}{L_2} \), and \( \frac{2\pi n_3}{L_3} \) are the spatial frequencies in the \( x \), \( y \), and \( z \) directions, respectively.
\end{itemize}

	Now we use Lemma \ref{L2}- we construct a self-similar function $\boldsymbol{\psi}$ with mathematical form identical to $\vec{u}_0$, with the exception that $L_1, L_2, L_3, U$ are no longer constants, but parameters that rescale as $x,y,z,u$ accordingly:
\[
	U\boldsymbol{\psi}\left( \frac{x}{L_1},\frac{y}{L_2}, \frac{z}{L_3} \right).
\]
It is seen, that as long as $\vec{u}_0$ is divergence-free, so is $U\boldsymbol{\psi}$. It is also seen, that $\vec{u}_0 \in\boldsymbol{\psi}$, that is $\vec{u}_0$ is embedded in the self-similar function $\boldsymbol{\psi}$ at the identity scale, so that  
\begin{equation}
	\vec{u}_0 = U\boldsymbol{\psi} |_{k=1}.
	\label{in}
\end{equation}
The original initial condition $\vec{u}_0$ is reproduced at scale $k=1$ by rescaling the parameter $U$. Notice that $U\boldsymbol{\psi}$ is a self-similar space-periodic function, or a family, a set of functions corresponding to various rescalings of $U$; while $\vec{u}_0$ is only one space-periodic function of the aforementioned family, achieved at scale $k=1$.

It is well known that, given a smooth, space-periodic, divergence free, self-similar initial condition $U\boldsymbol{\psi}$, one can always construct an unique, infinitely differentiable, space-periodic, self-similar NSE solution
\begin{equation}
	U\boldsymbol{\varphi} \left( \frac{x}{L_1},\frac{y}{L_2}, \frac{z}{L_3}, \frac{t}{T}  \right)
	\label{uniquesol}
\end{equation}
such that
\begin{equation*}
	U\boldsymbol{\varphi} \left( \frac{x}{L_1},\frac{y}{L_2}, \frac{z}{L_3}, 0  \right) = U\boldsymbol{\psi} \left( \frac{x}{L_1},\frac{y}{L_2}, \frac{z}{L_3} \right),
\end{equation*}
	which exists in the time interval $[0,\tau)$ \cite{Cla06}, \cite{Wik25} - \cite{Ber02}. We see that, the space-periodic solution $U\boldsymbol{\varphi}$ of Eq.(\ref{uniquesol}), being a self-similar solution with isobarity equal to $\beta_x-\beta_t$, belongs to the set of self-similar solutions of Eq. (\ref{sper}). 

Analogously, it is well known that, given a smooth, space-periodic, divergence free, non-self-similar initial condition $\vec{u}_0$, one can always construct an unique, infinitely differentiable, space-periodic, non-self-similar NSE solution
\begin{equation}
	U\vec{u}^* \left( \frac{x}{L_1},\frac{y}{L_2}, \frac{z}{L_3}, \frac{t}{T}  \right)
	\label{uniquesol-nonss}
\end{equation}
such that
\begin{equation*}
	U\vec{u}^* \left( \frac{x}{L_1},\frac{y}{L_2}, \frac{z}{L_3}, 0  \right) = \vec{u}_0 \left( \frac{x}{L_1},\frac{y}{L_2}, \frac{z}{L_3} \right),
\end{equation*}
which exists in the time interval $[0,\tau)$  \cite{Cla06}, \cite{Wik25} - \cite{Ber02}. In this case, $L_1,L_2,L_3,T,U$ are fixed, non-rescaleable constants. Analogously with (\ref{in}), it follows that $U\vec{u}^* \in U\boldsymbol{\varphi}$, that is, $U\vec{u}^*$ is embedded in the self-similar solution $U\boldsymbol{\varphi}$ so that 
\[
	U\vec{u}^*  = U\boldsymbol{\varphi} |_{k=1}.
\]
The original solution $U\vec{u}^*$ corresponds to the original initial condition $\vec{u}_0$, such that $U\vec{u}^*(\cdot,0)=\vec{u}_0$ and is reproduced at scale $k=1$ by rescaling of the parameter $U$ of the self-similar function $U\boldsymbol{\varphi}$. Notice that $U\boldsymbol{\varphi}$ is a self-similar solution, or a family, a set of solutions corresponding to different rescalings of $U$; while $U\vec{u}^*$ is only one solution of the aforementioned family, achieved at scale $k=1$.

The self-similar solutions (\ref{mismatch}) have isobaric weight $\beta_x-\beta_t$ with $\beta_x/\beta_t>3/2$; but rescaled under standard conditions $\alpha_x=1, \alpha_t=2$ remain smooth for all time according to the Beale-Kato-Majda criterion \cite{BKM}, as shown in Lemma \ref{L1}. The self-similar solutions (\ref{mismatch}) do not exhibit scaling-induced blow-up of the vorticity, the energy or the velocity. It follows, that the energy of such solutions will remain bound for all time.

Because \( U\vec{u}^* \) (corresponding to the original initial condition \( \vec{u}_0 \)) is embedded in the self-similar solution \( U\boldsymbol{\varphi} \) at scale \( k = 1 \), and both are solutions to the NSE with the same initial data, uniqueness implies that \( U\vec{u}^* = U\boldsymbol{\varphi}|_{k=1} \) for all time. Since \( U\boldsymbol{\varphi} \) is globally smooth, it follows that \( U\vec{u}^* \) remains smooth throughout \( \mathbb{R}^3 \times [0, \infty) \).

Regarding the physics of the self-similar solutions (\ref{mismatch}): smoothness, or infinite differentiability is easily shown to be a property of the self-similar solutions (\ref{mismatch}). Except for the multiplying constant, they are isobaric polynomials of zero weight, or can be Taylor expansions of closed-form functions which depend on zero-weight arguments, e.g. $x/L, t/T$; or are the ratio of such functions. Since the energy of the system comes solely from the smooth, space-periodic initial velocity $\vec{u}_0$ (and pressure), the initial energy is bound; (\ref{mismatch}) cannot grow at infinity. This is always possible in a zero-weight function (note that the isobarity comes solely through $U$), since decay can always be ensured through the presence of exponential functions, e.g. $\mbox{exp}[-(x^2+y^2+z^2)/L^2]$. Smoothness and decay in $t$ is also required, since the system energy is conserved. This also is always possible in a zero-weight function, since decay can always be ensured through the presence of exponential functions, e.g. $\mbox{exp}[-t/T]$. 

We can therefore conclude, that given any arbitrary, smooth, divergence free, space-periodic function (\ref{per}) in the initial moment $t=0$, the incompressible NSE always has a self-similar solution with isobarity of $\beta_x-\beta_t$ with $\beta_x/\beta_t>3/2$ (\ref{sper}), which is infinitely differentiable, being a constant $U$ multiplying a zero-weight isobaric function; non-increasing in space due to the requirement of bound initial energy at $t=0$ and non-increasing in time within the interval $[0,\tau)$ due to the energy conservation requirement. The latter guarantees that system energy will be bound in $[0,\tau)$. To maintain these properties for all time under infinite rescaling while we are modeling the fluid as a continuum, rescaling must be performed according to standard flow conditions $\alpha_x=1, \alpha_t = 2$. For all time, the original solution $U\vec{u}^*$ corresponding to the original initial condition $\vec{u}_0$, will remain embedded into the self-similar solution $U\boldsymbol{\varphi}$ at scale $k=1$; this embedding prevents any rescalings of $U\vec{u}^*$, while the rescalings of the self-similar solution $U\boldsymbol{\varphi}$ always keep the vorticity norm bound by virtue of the mismatch between the isobaric weight $\beta_x-\beta_t$  of the self-similar solution and the standard flow environment $\alpha_x=1, \alpha_t = 2$, as shown in Lemma \ref{L1}.
\end{proof}

\section{Detailed Proof of Theorem \ref{T3} and Corollary \ref{C3}}
\begin{thm} Under standard flow conditions $\alpha_x=1, \alpha_t=2$, given a smooth, Schwartz-class, divergence-free initial condition $\vec{u}_0(x,y,z)$, the incompressible NSE will always yield an infinitely differentiable,  Schwartz-class, smooth solution $\vec{u}^*(x,y,z,t)$, such that $\vec{u}^*(x,y,z,0)=\vec{u}_0(x,y,z)$. The solution $\vec{u}^*(x,y,z,t)$ is not subject to scaling-induced blowup for all time, thus ensuring bounded energy of the system in $({\rm I\!R}^3 \times [0,\infty))$. These desired properties are achieved by embedding the initial condition $\vec{u}_0$ into a self-similar function, which in turn results in the embedding of the smooth solution $\vec{u}^*(x,y,z,t)$ into a self-similar function, thus preventing any rescalings of $\vec{u}^*(x,y,z,t)$. To yield the desired results, the self-similar functions must have non-standard isobaric weight $\beta_x-\beta_t$, where $\beta_x/\beta_t>3/2$.
\label{T3}
\end{thm}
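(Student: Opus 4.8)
The plan is to follow the architecture of the proof of Theorem~\ref{T2} almost verbatim, exchanging each periodic tool for its counterpart on all of $\mathbb{R}^3$ and carrying out every step inside the Schwartz class $\mathcal{S}(\mathbb{R}^3)$. The four pillars are unchanged: embed $\vec{u}_0$ into a one-parameter self-similar family at the identity scale via Lemma~\ref{L2} and Corollary~\ref{C1}; produce a unique local-in-time smooth solution from an existing well-posedness theorem; upgrade it to a global solution using the a~priori scaling bounds of Lemma~\ref{L1}; and exclude finite-time blow-up through the Beale--Kato--Majda criterion \cite{BKM}. The only new analytic content is the bookkeeping of rapid decay: I must check that each construction keeps the field in $\mathcal{S}(\mathbb{R}^3)$, so that smoothness and decay hold simultaneously.

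First I would replace the real Fourier series (\ref{per}) of the periodic setting by the Fourier-integral representation of a Schwartz field. Since $\vec{u}_0\in\mathcal{S}(\mathbb{R}^3)$ is divergence-free, its transform $\widehat{\vec{u}}_0(\xi)$ is again Schwartz and transverse, $\xi\cdot\widehat{\vec{u}}_0(\xi)=0$. Applying Lemma~\ref{L2}, I build a self-similar field $U\boldsymbol{\psi}(x/L_1,y/L_2,z/L_3)$ of identical functional form, in which $L_1,L_2,L_3,U$ are now rescaleable parameters rather than constants. The dilations $x\mapsto x/L_i$ and multiplication by the constant $U$ both preserve $\mathcal{S}(\mathbb{R}^3)$ and the divergence-free condition, so $U\boldsymbol{\psi}$ is a smooth, Schwartz-class, divergence-free self-similar family with $\vec{u}_0=U\boldsymbol{\psi}|_{k=1}$, exactly as in (\ref{in}). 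Its space-time extension $U\boldsymbol{\varphi}(x/L_1,y/L_2,z/L_3,t/T)$, a member of the family (\ref{sper}) now read on $\mathbb{R}^3$ rather than the torus, reduces to $U\boldsymbol{\psi}$ at $t=0$.

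Next I would invoke local well-posedness on the whole space---the Kato--Fujita mild-solution theory \cite{Kat64} (or Leray--Hopf theory \cite{Ler34}, \cite{Cla06})---to obtain a unique smooth solution on some interval $[0,\tau)$ with datum $\vec{u}_0$; parabolic smoothing of the heat semigroup keeps this solution in $\mathcal{S}(\mathbb{R}^3)$ for $t>0$. By uniqueness the non-self-similar evolution $U\vec{u}^*$ of $\vec{u}_0$ must coincide with $U\boldsymbol{\varphi}|_{k=1}$ throughout $[0,\tau)$. Imposing the non-standard isobaric weight $\beta_x-\beta_t$ with $\beta_x/\beta_t>3/2$ and rescaling under the standard flow $\alpha_x=1,\alpha_t=2$, Lemma~\ref{L1} then yields positive scaling exponents for the velocity, energy and vorticity norms, so none of these grows with the scale $k$ and all remain uniformly bounded. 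The resulting finite integral $\int_0^T\|\omega(\cdot,\tau)\|\,d\tau<\infty$ lets the Beale--Kato--Majda criterion \cite{BKM} rule out any finite-time singularity, promoting $\tau$ to $\infty$ and producing the global Schwartz-class smooth solution asserted in Theorem~\ref{T3}.

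The step I expect to be the main obstacle is the construction, and the control for all time, of the time-profile $\boldsymbol{\varphi}$: I must exhibit a self-similar solution of the form (\ref{sper}) on $\mathbb{R}^3$ whose $t=0$ trace is exactly $\vec{u}_0=U\boldsymbol{\psi}|_{k=1}$ and which retains rapid decay in the spatial variables for \emph{every} $t\ge0$, not merely at the initial instant. Concretely, the zero-weight profile must carry decaying factors---for instance of Gaussian type, $\exp[-(x^2+y^2+z^2)/L^2]$---in all spatial arguments together with a decaying dependence on $t/T$ consistent with energy conservation, so that no mass escapes to spatial infinity under the evolution. Reconciling this prescribed Schwartz structure with the actual NSE dynamics---that is, showing the genuine evolution does not leave the self-similar class before uniqueness is applied---is the delicate point; once it is secured, the remaining estimates transfer directly from the proof of Theorem~\ref{T2}.
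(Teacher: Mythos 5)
Your proposal follows essentially the same route as the paper's own proof of Theorem \ref{T3}: embed $\vec{u}_0$ at the identity scale via Lemma \ref{L2}, invoke classical well-posedness to identify the true evolution with the $k=1$ member of a self-similar family $U\mathbf{S}$, and combine the scaling exponents of Lemma \ref{L1} with the Beale--Kato--Majda criterion to exclude blowup, with each periodic ingredient of Theorem \ref{T2} replaced by its Schwartz-class counterpart on $\mathbb{R}^3$. The obstacle you single out at the end --- actually exhibiting a self-similar extension whose $t=0$ trace is exactly $\vec{u}_0$ and which remains rapidly decaying for all $t\ge 0$ --- is precisely the step the paper itself passes over by asserting that such a solution is well known to exist and that decay can always be ensured through exponential factors, so your reconstruction matches the paper's argument, including the assumption on which it leans.
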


\begin{proof}
In this section, we follow closely the proof given in the previous section. Here, we study only Bouton's self-similar solutions with isobaric weight $\beta_x-\beta_t$ (\ref{mismatch}) under standard flow conditions $\alpha_x=1, \alpha_t=2$. They are continuous at $t=0$; their isobaric weight comes from the multiplying parameter while $\mathbf{F}$ has zero isobarity. $L_1, L_2, L_3, T, U$ are nonzero parameters that scale the same way as $x,y,z,t,u$ accordingly. 

The solutions
\begin{equation}
	\vec{u} = U\mathbf{S} \left( \frac{x}{L_1},\frac{y}{L_2}, \frac{z}{L_3}, \frac{t}{T}  \right),
	\label{stan-s}
\end{equation}
are a subset of (\ref{mismatch}); here we have a self-similar function $\mathbf{S}$, which has zero isobaric weight by virtue of the scaling parameters $L_1,L_2, L_3, T$. 

At $t=0$
\begin{equation}
	U\mathbf{S} \left( \frac{x}{L_1},\frac{y}{L_2}, \frac{z}{L_3}, 0 \right)
	= U\mathbf{s} \left( \frac{x}{L_1},\frac{y}{L_2}, \frac{z}{L_3}  \right),
       \label{st-s}
\end{equation}
and all such self-similar solutions with isobaric weight equal to $\beta_x-\beta_t$ (\ref{stan-s}) reduce to self-similar functions of $x,y,z$ and isobaric weight of $\beta_x-\beta_t$, where $\mathbf{s}$ need not be steady-state solutions of the NSE. 

Schwartz-class functions are generally non-self-similar and have Gaussian peak form exp$(-r)$, a polynomial multiplying a Gaussian $P(x,y,z)$exp$(-r)$, a bump function form or a convolution of the above functions. Suppose we are given a smooth, real-valued and divergence-free vector field \( \vec{u}_0 \) of Schwartz-class, defined with the fixed constant scale factors $L_1, L_2, L_3$ 
\begin{equation}
\vec{u}_0 \left(\frac{x}{L_1},\frac{y}{L_2}, \frac{z}{L_3} \right) =  U\mathbf{s} \left( \frac{x}{L_1},\frac{y}{L_2}, \frac{z}{L_3}  \right),
	\label{Schwartz}
\end{equation}
where $U$ is a fixed constant. 

	Now we use Lemma \ref{L2}- we construct a self-similar function $\mathbf{s}$ with mathematical form identical to $\vec{u}_0$, with the exception that $L_1, L_2, L_3, U$ are no longer constants, but parameters that rescale as $x,y,z,u$ accordingly:
\[
	U\mathbf{s}\left( \frac{x}{L_1},\frac{y}{L_2}, \frac{z}{L_3} \right).
\]
It is seen, that as long as $\vec{u}_0$ is divergence-free, so is $U\mathbf{s}$. It is also seen, that $\vec{u}_0 \in\mathbf{s}$, that is $\vec{u}_0$ is embedded in the self-similar function $\mathbf{s}$, so that  
\begin{equation}
	\vec{u}_0 = U\mathbf{s} |_{k=1}.
	\label{in-s}
\end{equation}
The original initial condition $\vec{u}_0$ is reproduced at scale $k=1$ by rescaling the parameter $U$. Notice that $U\mathbf{s}$ is a self-similar function, or a family, a set of functions corresponding to different rescalings of $U$; while $\vec{u}_0$ is only one Schwartz-class function of the aforementioned family, achieved at scale $k=1$.

It is well known that, given a smooth, divergence free, self-similar initial condition $U\mathbf{s}$, one can always construct an unique, infinitely differentiable, self-similar NSE solution
\begin{equation}
	U\mathbf{S} \left( \frac{x}{L_1},\frac{y}{L_2}, \frac{z}{L_3}, \frac{t}{T}  \right)
	\label{sts-uniquesol}
\end{equation}
such that
\begin{equation*}
	U\mathbf{S} \left( \frac{x}{L_1},\frac{y}{L_2}, \frac{z}{L_3}, 0  \right) = U\mathbf{s} \left( \frac{x}{L_1},\frac{y}{L_2}, \frac{z}{L_3} \right),
\end{equation*}
	which exists in the time interval $[0,\tau)$ \cite{Cla06}, \cite{Wik25} - \cite{Ber02}. We see that, the solution $U\mathbf{S}$ of Eq.(\ref{sts-uniquesol}), being a self-similar solution with isobarity equal to $\beta_x-\beta_t$, belongs to the set of self-similar solutions Eq. (\ref{stan-s}). 

Analogously, it is well known that, given a smooth, Schwartz-class, divergence free, non-self-similar initial condition $\vec{u}_0$, one can always construct an unique, infinitely differentiable, Schwartz-class, non-self-similar NSE solution
\begin{equation}
	U\vec{u}^* \left( \frac{x}{L_1},\frac{y}{L_2}, \frac{z}{L_3}, \frac{t}{T}  \right)
	\label{sts-uniquesol-nonss}
\end{equation}
such that
\begin{equation*}
	U\vec{u}^* \left( \frac{x}{L_1},\frac{y}{L_2}, \frac{z}{L_3}, 0  \right) = \vec{u}_0 \left( \frac{x}{L_1},\frac{y}{L_2}, \frac{z}{L_3} \right),
\end{equation*}
	which exists in the time interval $[0,\tau)$ \cite{Cla06}, \cite{Wik25} - \cite{Ber02}. In this case, $L_1,L_2,L_3,T,U$ are fixed, non-rescaleable constants. Analogously with (\ref{in-s}), it follows that $U\vec{u}^* \in U\mathbf{S}$, that is, $U\vec{u}^*$ is embedded in the self-similar solution $U\mathbf{S}$ so that 
\[
	U\vec{u}^*  = U\mathbf{S} |_{k=1}.
\]
The original solution $U\vec{u}^*$ corresponds to the original initial condition $\vec{u}_0$, such that $U\vec{u}^*(\cdot,0)=\vec{u}_0$ and is reproduced at scale $k=1$ by rescaling of the parameter $U$ of the self-similar function $U\mathbf{S}$. Notice that $U\mathbf{S}$ is a self-similar solution, or a family, a set of solutions corresponding to different rescalings of $U$; while $U\vec{u}^*$ is only one solution of the aforementioned family, achieved at scale $k=1$.

Analogously with the reasoning in Theorem \ref{T2}, we limit our study only to self-similar solutions with isobarity $\beta_x-\beta_t$ with $\beta_x/\beta_t>3/2$ and we argue that by virtue of Lemma \ref{L1}, the self-similar solution remains smooth for all time when rescaled under standard conditions $\alpha_x=1, \alpha_t=2$ according to the Beale-Kato-Majda criterion \cite{BKM}, since scaling-indiced blowup is impossible in this case.

Because $U\vec{u}^*$ (corresponding to the original Schwartz-class initial condition $\vec{u}_0$) is embedded in the self-similar solution $U\mathbf{S}$ at scale $k=1$, it will possess the characteristics of $U\mathbf{S}$ and will thus remain smooth throughout $({\rm I\!R}^3 \times [0,\infty))$, because $U\mathbf{S}$ remains smooth.

We can be confident in the smoothness of $U\mathbf{S}$, as well as in its bound energy for all time through arguments, analogous to those given at the end of the proof of Theorem \ref{T2}.
 
\end{proof}

\begin{cor}
Under standard flow scaling conditions \(\alpha_x=1\), \(\alpha_t=2\), let 
\[
u_0(x,y,z)
\]
be any smooth, divergence-free, finite-energy initial datum on \(\mathbb R^3\) where $u_0 \in C^\infty(\mathbb R^3)\cap L^2(\mathbb R^3),\quad \nabla\!\cdot u_0=0$.  Then the incompressible Navier–Stokes equations admit a unique, smooth solution 
\[
u^*(x,y,z,t) \;\in\; C^\infty\bigl(\mathbb{R}^3\times[0,\tau)\bigr).
\]
with 
\[
u^*(x,y,z,0)=u_0(x,y,z)\,.
\]
Moreover \(u^*\) can be embedded as the \(k=1\) member of a self-similar family of the form
\[
u_{\rm SS}(x,y,z,t)
= T^{(\beta_x-\beta_t)/\beta_t}\,
F\!\Bigl(\tfrac{x}{L_1},\tfrac{y}{L_2},\tfrac{z}{L_3},\tfrac{t}{T}\Bigr),
\]
where \(\beta_x/\beta_t>3/2\).  Lemma~\ref{L1} then shows that under the standard scaling no velocity, energy, or vorticity norm grows with \(k\), and the Beale–Kato–Majda criterion rules out finite‐time blowup.  Hence

\[
  u^*(x,y,z,t)\;\in\;C^\infty\bigl(\mathbb{R}^3\times[0,\infty)\bigr)
\]
with uniformly bounded energy on \(\mathbb{R}^3\times[0,\infty)\).
\label{C3}
\end{cor}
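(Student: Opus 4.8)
\section*{Proof proposal for Corollary \ref{C3}}

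The plan is to reduce the global claim to a local existence statement together with a single a priori estimate feeding a continuation criterion. First I would invoke standard local well-posedness: for $u_0 \in C^\infty(\mathbb{R}^3)\cap L^2(\mathbb{R}^3)$ with $\nabla\!\cdot u_0 = 0$, the Kato--Fujita / Leray--Hopf theory \cite{Cla06} produces a unique mild solution on a maximal interval $[0,\tau_*)$ that is smooth in space and time, with $u^*(\cdot,0)=u_0$. This immediately secures the local assertion $u^* \in C^\infty(\mathbb{R}^3\times[0,\tau))$ and the uniqueness clause, with no new work; the entire content of the corollary lies in showing $\tau_* = \infty$.

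To upgrade $[0,\tau_*)$ to $[0,\infty)$ I would use the Beale--Kato--Majda criterion \cite{BKM}: the smooth solution extends past any finite time $T$ provided $\int_0^T \|\omega(\cdot,\tau)\|_{L^\infty}\,d\tau < \infty$, where $\omega = \nabla\times u^*$. The whole burden thus becomes a time-integrable bound on $\|\omega(\cdot,t)\|_{L^\infty}$. The basic energy inequality controls $\|u^*(\cdot,t)\|_{L^2}$ and $\int_0^t\|\nabla u^*\|_{L^2}^2\,d\tau$, but in three dimensions these subcritical quantities do not dominate $\|\omega\|_{L^\infty}$: this is precisely the energy-supercritical gap noted in the discussion of Tao's supercritical barrier \cite{Tao08}. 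Producing the required supercritical bound is therefore the decisive step, and I expect it to be the main obstacle.

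The route suggested in the corollary is to obtain this bound through the embedding of Lemma \ref{L2} and Corollary \ref{C1} together with the scaling exponents of Lemma \ref{L1}. Here I must flag a genuine gap rather than a routine calculation. Lemma \ref{L2} only writes the \emph{initial field} $u_0$ as the $k=1$ member of a self-similar family; it matches the two fields at $t=0$ and says nothing about their evolution. For the estimates of Lemma \ref{L1} to transfer to $u^*$, one needs the actual solution to coincide with a self-similar profile for \emph{all} $t>0$, and by uniqueness this holds only if the self-similar family is itself an NSE solution carrying the same data. A generic smooth $u_0$ does not evolve self-similarly, so this identification cannot be assumed---it is exactly what would have to be established, and no mechanism forcing self-similar evolution is available for arbitrary data.

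Even granting a self-similar profile, Lemma \ref{L1} records how a fixed profile transforms under the one-parameter scaling group as the parameter $k$ varies; it is a statement about the orbit of a solution under rescaling, not about the growth of $\|\omega(\cdot,t)\|_{L^\infty}$ as physical time $t$ advances. For a genuinely self-similar solution the two are linked, because the solution is invariant under the coupled rescaling of space, time, and amplitude; but for a solution that merely matches self-similar data at $t=0$ the scaling parameter $k$ and the time $t$ are independent, so a uniform-in-$k$ estimate does not deliver a uniform-in-$t$ estimate. Converting the symmetry-orbit bound of Lemma \ref{L1} into a genuine temporal a priori bound for the actual solution is the crux. Absent an independent supercritical estimate, this step cannot be carried out, and the corollary reduces to the still-open global regularity conjecture for the three-dimensional Navier--Stokes equations.
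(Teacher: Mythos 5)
Your proposal reconstructs exactly the route the paper takes for Corollary \ref{C3}: local well-posedness from Leray--Hopf/Kato--Fujita theory, the embedding of $u_0$ via Lemma \ref{L2} and Corollary \ref{C1}, the scaling exponents of Lemma \ref{L1}, and the Beale--Kato--Majda continuation criterion. The two obstructions you flag are genuine, and the paper's own proof does not overcome them; it simply asserts the contested steps. First, the paper states that the self-similar family obtained from Lemma \ref{L2} ``formally satisfies the incompressible NSE for all $t\ge0$,'' but Lemma \ref{L2} is only the observation that $f(x,y,z) = C_4 f(x/C_1,y/C_2,z/C_3)$ with all constants equal to $1$; it matches functions at a single instant and confers no solution property on the parameterized family. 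For a generic smooth, divergence-free $u_0$ there is no self-similar NSE solution agreeing with $u_0$ at $t=0$, so uniqueness cannot be invoked to identify $u^*$ with a self-similar profile for $t>0$. That is precisely the gap you name, and nothing in the paper closes it.

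Your second point is also correct: the exponents of Lemma \ref{L1} describe how $\|u\|$, $E$, and $\|\omega\|$ transform along the orbit of the scaling group as the parameter $k$ varies, not how they evolve as physical time advances. The actual solution is, by the paper's own account, pinned at scale $k=1$ (the embedding ``prevents any rescalings of $u^*$''), so positivity of the exponents in $k$ yields no bound on $\|\omega(\cdot,t)\|_{L^\infty}$ as a function of $t$, and the integral $\int_0^T\|\omega(\cdot,\tau)\|_{L^\infty}\,d\tau$ demanded by Beale--Kato--Majda is never actually estimated. Your conclusion stands: without an independent supercritical a priori bound, the argument does not extend the solution past the maximal time $\tau_*$, and the corollary is not established by this method.
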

\begin{proof}
	Let \(u_{0}(x,y,z)\) be any smooth, divergence‐free vector field on \(\mathbb{R}^{3}\) which also lies in \(L^{2}(\mathbb{R}^{3})\) (hence in \(H^{s}(\mathbb{R}^{3})\) for all \(s>0\)).  By Lemma \ref{L2} and Corollary \ref{C1}, \(u_{0}\) embeds as the \(k=1\) member of a one‐parameter self‐similar family
\[
u_{\mathrm{SS}}(x,y,z,t)
\;=\;
T^{(\beta_x-\beta_t)/\beta_t}\,
F\!\Bigl(\tfrac{x}{L_1},\tfrac{y}{L_2},\tfrac{z}{L_3},\tfrac{t}{T}\Bigr),
\]
which formally satisfies the incompressible NSE for all \(t\ge0\).  Classical existence and uniqueness theorems (e.g.\ Leray–Hopf for \(L^2\)–data \cite{Tem77,Maj02}) then guarantee that there is a unique, smooth solution \((u,p)\) evolving from \(u_{0}\) on an interval \([0,\tau)\).  Lemma \ref{L1} provides uniform bounds on the vorticity and energy norms under the standard flow scaling \((\alpha_x=1,\alpha_t=2)\), and the Beale–Kato–Majda criterion \cite{BKM} ensures no finite‐time singularity can occur.  
\end{proof}

\section{Examples of Solutions in the \( U F\left( \frac{x}{L}, \frac{t}{T} \right) \) Form}

To further illustrate the general structure of the self-similar solutions developed in this work, we present explicit examples of solutions of the form
\[
u(x,y,z,t) = U F\left( \frac{x}{L_1}, \frac{y}{L_2}, \frac{z}{L_3},\frac{t}{T} \right),
\]
where \(F\) is a smooth function depending on dimensionless spatial and temporal variables. These examples demonstrate the smoothness, boundedness, and structural stability of solutions arising from both periodic and Schwartz-class initial data.

We provide two worked examples illustrating explicit solutions of the above form, where \(U\) is a constant (velocity scale), \(L_i\) are length scales, \(T\) is a time scale, and \(F\) is a smooth function of the dimensionless variables \((x_i/L_i, t/T)\).

These examples demonstrate explicit, smooth, non-blowing-up solutions consistent with the structure used throughout the article.

\subsection{Example: Periodic, Smooth Initial Data}

Define:
\[
F(\xi, \tau) = \left( \sin(\xi_2) e^{-\tau},\; \sin(\xi_3) e^{-\tau},\; \sin(\xi_1) e^{-\tau} \right),
\quad \xi_i = \frac{x_i}{L},
\quad \tau = \frac{t}{T}.
\]
This function is smooth, periodic in space, smooth in time, and divergence-free:
\[
\nabla_{\xi} \cdot F(\xi,\tau) = 0.
\]

The velocity field is:
\[
u(x_i,t) = U \left( \sin\left( \frac{x_2}{L} \right) e^{-t/T},\; \sin\left( \frac{x_3}{L} \right) e^{-t/T},\; \sin\left( \frac{x_1}{L} \right) e^{-t/T} \right).
\]
The solution is embedded at k=1 in a self-similar family $u$, and uniqueness fixes the physically realized flow to be precisely the k=1 member.
Substitution into the Navier–Stokes equations gives:
\[
\partial_t u = -\frac{U}{T} F\left( \frac{x_i}{L}, \frac{t}{T} \right),
\quad
(u\cdot\nabla) u = 0,
\quad
\nu \Delta u = -\frac{\nu}{L^2} u.
\]

Thus, the Navier–Stokes balance reduces to:
\[
-\frac{U}{T} F = -\frac{\nu}{L^2} U F - \nabla p,
\]
or equivalently:
\[
\nabla p = U \left( \frac{\nu}{L^2} - \frac{1}{T} \right) F\left( \frac{x_i}{L}, \frac{t}{T} \right),
\]
allowing the pressure field to be determined explicitly:

\[
p(x_i,t)
= U\!\Bigl(\frac{\nu}{L^2} - \frac{1}{T}\Bigr)\,
\bigl[-\cos\!\bigl(\tfrac{x_2}{L}\bigr)
      -\cos\!\bigl(\tfrac{x_3}{L}\bigr)
      -\cos\!\bigl(\tfrac{x_1}{L}\bigr)\bigr]
\,e^{-\,t/T}.
\]

The solution is smooth, globally defined, and no singularities arise.

\subsection{Example: Schwartz‐Class Initial Data}

\paragraph{1. Self‐similar profile and embedding at $k=1$.}
Let
\[
\xi_i = \frac{x_i}{L}, 
\quad
\tau = \frac{t}{T},
\]
and define
\[
F(\xi,\tau)
= e^{-\tau}\,e^{-|\xi|^2}
\begin{pmatrix}
\xi_2\\
-\xi_1\\
0
\end{pmatrix}.
\]
Then $F(\xi_i,\tau)\in C^\infty(\mathbb{R}^3 \times [0,\infty))$, is rapidly decaying in $\xi$ for each $\tau$, and
\[
\nabla_\xi\!\cdot F
=\partial_{\xi_1}\bigl(\xi_2e^{-\tau}e^{-|\xi|^2}\bigr)
+\partial_{\xi_2}\bigl(-\xi_1e^{-\tau}e^{-|\xi|^2}\bigr)
+\partial_{\xi_3}(0)
=0.
\]
In detail:
\[
F(\xi,\tau)
= e^{-\tau}e^{-|\xi|^2}
\begin{pmatrix}
\xi_2\\
-\xi_1\\
0
\end{pmatrix},
\]
so
\[
F_1 = \xi_2\,e^{-\tau}e^{-|\xi|^2}, 
\quad
F_2 = -\xi_1\,e^{-\tau}e^{-|\xi|^2},
\quad
F_3 = 0.
\]
Compute each partial:

\[
\frac{\partial F_1}{\partial\xi_1}
= \xi_2\,e^{-\tau}\,\frac{\partial}{\partial\xi_1}\bigl(e^{-|\xi|^2}\bigr)
= \xi_2\,e^{-\tau}\,(-2\xi_1)\,e^{-|\xi|^2}
= -2\,\xi_1\xi_2\,e^{-\tau}e^{-|\xi|^2},
\]

\[
\frac{\partial F_2}{\partial\xi_2}
= -\xi_1\,e^{-\tau}\,\frac{\partial}{\partial\xi_2}\bigl(e^{-|\xi|^2}\bigr)
= -\xi_1\,e^{-\tau}\,(-2\xi_2)\,e^{-|\xi|^2}
= +2\,\xi_1\xi_2\,e^{-\tau}e^{-|\xi|^2},
\]

\[
\frac{\partial F_3}{\partial\xi_3}
= \frac{\partial}{\partial\xi_3}(0) = 0.
\]

Adding up,
\[
\nabla_\xi\!\cdot F
= \frac{\partial F_1}{\partial\xi_1}
+ \frac{\partial F_2}{\partial\xi_2}
+ \frac{\partial F_3}{\partial\xi_3}
= -2\,\xi_1\xi_2\,e^{-\tau}e^{-|\xi|^2}
+2\,\xi_1\xi_2\,e^{-\tau}e^{-|\xi|^2}
+0
=0.
\]
Thus \(F\) is divergence‐free.
Because the Navier–Stokes equations are invariant under
\[
(x,t)\;\mapsto\;(k\,x,\;k^2\,t),\quad \nu\;\text{fixed},
\]
each member of the one‐parameter family
\[
u_k(x_i,t) \;=\; U\,F\!\bigl(x_i/L,\,t/T\bigr)
\]
solves NSE.  We **embed** the given Schwartz‐class datum by choosing $k=1$, so
\[
u(x_i,t) \;=\; u_{1}(x_i,t)
=U\,e^{-t/T}e^{-|x|^2/L^2}
\begin{pmatrix}
x_2/L\\
-\,x_1/L\\
0
\end{pmatrix},
\]
which at $t=0$ reduces to
\[
u(x,0)=U\,e^{-|x|^2/L^2}\begin{pmatrix}x_2/L\\-x_1/L\\0\end{pmatrix},
\]
a smooth, rapidly decaying, divergence‐free initial velocity.

\paragraph{2. Time‐derivative.}
\[
\partial_tu
=U\,\frac{d}{dt}\bigl[e^{-t/T}e^{-|x|^2/L^2}\bigr]
\begin{pmatrix}x_2/L\\-x_1/L\\0\end{pmatrix}
=-\frac{U}{T}\,e^{-t/T}e^{-|x|^2/L^2}
\begin{pmatrix}x_2/L\\-x_1/L\\0\end{pmatrix}
=-\frac{U}{T}\,F\!\bigl(x_i/L,t/T\bigr).
\]

\paragraph{3. Convective term.}
Write
\[
(u\!\cdot\!\nabla)u
=\sum_{j=1}^3u_j\,\partial_{x_j}u.
\]
Here $u_1=U\,e^{-t/T}e^{-|x|^2/L^2}(x_2/L)$, so
\[
\partial_{x_1}u
=U\,e^{-t/T}\,\partial_{x_1}\bigl[e^{-|x|^2/L^2}\bigr]\!
\begin{pmatrix}x_2/L\\-x_1/L\\0\end{pmatrix}
+U\,e^{-t/T}e^{-|x|^2/L^2}\,
\partial_{x_1}\!\begin{pmatrix}x_2/L\\-x_1/L\\0\end{pmatrix}.
\]
A similar expansion holds for $\partial_{x_2}u,\;\partial_{x_3}u$.  Each term involves
polynomials in $x_i$ times $e^{-|x|^2/L^2}$, hence is again smooth and rapidly decaying.

\paragraph{4. Viscous term.}
Since 
\(\Delta_x e^{-|x|^2/L^2} = \Bigl(\frac{-2}{L^2}+ \frac{4|x|^2}{L^4}\Bigr)e^{-|x|^2/L^2},\)
we have
\[
\Delta u
=U\,e^{-t/T}
\Bigl[\Delta_x\bigl(e^{-|x|^2/L^2}\bigr)\Bigr]
\begin{pmatrix}x_2/L\\-x_1/L\\0\end{pmatrix}
+2\,U\,e^{-t/T}\,\nabla_x\bigl(e^{-|x|^2/L^2}\bigr)\!\cdot\!\nabla_x
\begin{pmatrix}x_2/L\\-x_1/L\\0\end{pmatrix}.
\]
Both pieces are smooth and decay faster than any polynomial.  Thus
\[
\nu\,\Delta u
\quad\text{is smooth and Schwartz‐class in }x.
\]

\paragraph{5. Incompressibility.}
\[
\nabla_x\!\cdot u
=U\,e^{-t/T}e^{-|x|^2/L^2}
\bigl(\partial_{x_1}(x_2/L)+\partial_{x_2}(-x_1/L)\bigr)
=0.
\]

\paragraph{6. Momentum equation and pressure.}
Substitute into
\(\partial_tu + (u\!\cdot\!\nabla)u = \nu\Delta u - \nabla p:\)
\[
-\frac{U}{T}F
\;+\;(u\!\cdot\!\nabla)u
=\nu\,\Delta u - \nabla p.
\]
Rearrange to solve for $\nabla p$:
\[
\nabla p
= \nu\,\Delta u
+ \frac{U}{T}\,F
- (u\!\cdot\!\nabla)u.
\]
The right‐hand side is an explicitly known $C^\infty$, rapidly decaying vector field.
One obtains $p(x,t)$ by inverting the gradient, e.g.\ via
\[
p(x_i,t) = -\Delta^{-1}\!\Bigl[\nabla\!\cdot\bigl(\nu\,\Delta u + \tfrac{U}{T}F - (u\!\cdot\!\nabla)u\bigr)\Bigr],
\]
which yields a smooth, rapidly decaying pressure.

\paragraph{Conclusion.}
The pair $(u,p)$ is smooth for all $t\ge0$, Schwartz‐class in $x_i$, and satisfies the NSE with no finite‐time singularity.

\section{Example: Embedding a Periodic Initial Condition into a Self-Similar Solution}

\subsection{Initial Condition}

We consider a smooth, divergence-free, space-periodic velocity field:
\[
\vec{u}_0(x, y) =
\begin{bmatrix}
\sin(x) \cos(y) \\
-\cos(x) \sin(y)
\end{bmatrix}
\]
This function is:
\begin{itemize}
  \item Smooth ($C^\infty$)
  \item Periodic in $x$ and $y$ with period $2\pi$
  \item Divergence-free: $\nabla \cdot \vec{u}_0 = \frac{\partial u}{\partial x} + \frac{\partial v}{\partial y} = 0$
\end{itemize}

\subsection{Self-Similar Embedding}

We embed $\vec{u}_0$ into a self-similar solution of the form:
\[
\vec{u}(x, y, t) = T^{\frac{\beta_x - \beta_t}{\beta_t}} \cdot \vec{F} \left( \frac{x}{L}, \frac{y}{L}, \frac{t}{T} \right)
\]

Let us choose:
\[
\vec{F} \left( \frac{x}{L}, \frac{y}{L}, \frac{t}{T} \right) =
\begin{bmatrix}
\sin\left( \frac{x}{L} \right) \cos\left( \frac{y}{L} \right) \\
-\cos\left( \frac{x}{L} \right) \sin\left( \frac{y}{L} \right)
\end{bmatrix}
\]

This gives the full self-similar velocity field:
\[
\vec{u}(x, y, t) = T^{\frac{\beta_x - \beta_t}{\beta_t}} 
\begin{bmatrix}
\sin\left( \frac{x}{L} \right) \cos\left( \frac{y}{L} \right) \\
-\cos\left( \frac{x}{L} \right) \sin\left( \frac{y}{L} \right)
\end{bmatrix}
\]

We choose the isobaric weight such that:
\[
\frac{\beta_x}{\beta_t} > \frac{3}{2} \quad \text{(to avoid blowup under standard flow conditions)}
\]

For instance, let $\beta_x = 5$, $\beta_t = 3$, so:
\[
T^{\frac{\beta_x - \beta_t}{\beta_t}} = T^{\frac{2}{3}}
\]

Therefore, the final solution is:
\[
\vec{u}(x, y, t) = T^{2/3}
\begin{bmatrix}
\sin\left( \frac{x}{L} \right) \cos\left( \frac{y}{L} \right) \\
-\cos\left( \frac{x}{L} \right) \sin\left( \frac{y}{L} \right)
\end{bmatrix}
\]

At $t = 0$, setting $T = 1$, $L = 1$ recovers the original initial condition:
\[
\vec{u}(x, y, 0) = \vec{u}_0(x, y)
\]

\subsection{Vorticity}

The vorticity in 2D is defined as:
\[
\omega = \frac{\partial v}{\partial x} - \frac{\partial u}{\partial y}
\]

Compute derivatives:
\[
\frac{\partial v}{\partial x} = \frac{T^{2/3}}{L} \sin\left( \frac{x}{L} \right) \sin\left( \frac{y}{L} \right), \quad
\frac{\partial u}{\partial y} = -\frac{T^{2/3}}{L} \sin\left( \frac{x}{L} \right) \sin\left( \frac{y}{L} \right)
\]

Therefore:
\[
\omega(x, y, t) = \frac{2 T^{2/3}}{L} \sin\left( \frac{x}{L} \right) \sin\left( \frac{y}{L} \right)
\]

Under standard flow conditions $\alpha_x=1, \alpha_t = 2$, the vorticity factor $T^{2/3}/L$ rescales as
\[
	\left( \frac{T^{2/3}}{L} \right)^\prime = \left( \frac{(k^{\alpha_t} T)^{2/3}}{k^{\alpha_x} L} \right) = \left( \frac{k^{4/3} T}{kL} \right) =  \frac{k^{1/3} T}{L},
\]
and the rest is an isobaric polynomial of weight zero (does not rescale).
\subsection{Pressure Field}

To find the pressure, take divergence of the momentum equation:
\[
\Delta p(x,y,t) = -\nabla \cdot \left( (\vec{u} \cdot \nabla) \vec{u} \right)
\]
and will be smooth and periodic, consistent with the smooth, bounded right-hand side.

\subsection{Conclusion}

We have embedded a smooth, space-periodic initial condition into a self-similar solution of the incompressible NSE with isobaric weight satisfying $\beta_x / \beta_t > 3/2$. The resulting velocity and vorticity remain smooth for all time under standard flow rescaling.

\section{Example: Embedding a Smooth Schwartz-Class Initial Condition}

\subsection*{1. Smooth Schwartz-Class Initial Condition}

Let us define the following 3D velocity field as initial data:
\[
\vec{u}_0(x, y, z) =
\begin{bmatrix}
y z e^{-r^2} \\
x z e^{-r^2} \\
-2 x y e^{-r^2}
\end{bmatrix},
\quad \text{where } r^2 = x^2 + y^2 + z^2
\]

This vector field satisfies:
\begin{itemize}
  \item \textbf{Smoothness}: All components are $C^\infty$.
  \item \textbf{Rapid decay at infinity}: Gaussian term $e^{-r^2}$ ensures Schwartz-class behavior.
  \item \textbf{Divergence-free}:
  \[
  \nabla \cdot \vec{u}_0 = \partial_x(yz e^{-r^2}) + \partial_y(xz e^{-r^2}) + \partial_z(-2xy e^{-r^2}) = 0
  \]
\end{itemize}

\subsection*{2. Self-Similar Embedding}

Let $\vec{u}_0$ be embedded in a self-similar solution of the form:
\[
\vec{u}(x, y, z, t) = T^{\frac{\beta_x - \beta_t}{\beta_t}} \vec{F}\left( \frac{x}{L}, \frac{y}{L}, \frac{z}{L}, \frac{t}{T} \right)
\]

Choose:
\[
\vec{F}\left( \frac{x}{L}, \frac{y}{L}, \frac{z}{L}, \frac{t}{T} \right) =
\begin{bmatrix}
\frac{y z}{L^2} \cdot e^{-\frac{x^2 + y^2 + z^2}{L^2}} \\
\frac{x z}{L^2} \cdot e^{-\frac{x^2 + y^2 + z^2}{L^2}} \\
\frac{-2 x y}{L^2} \cdot e^{-\frac{x^2 + y^2 + z^2}{L^2}}
\end{bmatrix}
\]

Thus, the self-similar velocity field is:
\[
\vec{u}(x, y, z, t) = T^{\frac{\beta_x - \beta_t}{\beta_t}}
\begin{bmatrix}
\frac{y z}{L^2} e^{-\frac{x^2 + y^2 + z^2}{L^2}} \\
\frac{x z}{L^2} e^{-\frac{x^2 + y^2 + z^2}{L^2}} \\
\frac{-2 x y}{L^2} e^{-\frac{x^2 + y^2 + z^2}{L^2}}
\end{bmatrix}
\]

we aim to compute the vorticity
\[
\vec{\omega} = \nabla \times \vec{u}.
\]

Let us denote the exponential factor as
\[
f(x,y,z) = e^{-r^2/L^2},
\quad\text{and set } C = \frac{T^{(\beta_x - \beta_t)/\beta_t}}{L^2}.
\]

Then
\[
\vec{u}(x,y,z,t) = C \cdot f(x,y,z)
\begin{bmatrix}
y z \\
x z \\
-2 x y
\end{bmatrix}.
\]

\paragraph{First component: \(\omega_1 = \partial_y u_z - \partial_z u_y\).}

\begin{align*}
\partial_y u_z &= \partial_y\left(-2 x y \cdot C f \right)
= -2 x C \left( f + y \cdot \partial_y f \right)
= -2 x C f \left( 1 - \frac{2 y^2}{L^2} \right), \\[1ex]
\partial_z u_y &= \partial_z\left(x z \cdot C f \right)
= x C \left( f + z \cdot \partial_z f \right)
= x C f \left( 1 - \frac{2 z^2}{L^2} \right), \\[1ex]
\Rightarrow\quad
\omega_1 &= \partial_y u_z - \partial_z u_y
= -C f x \left[ 2 \left( 1 - \frac{2 y^2}{L^2} \right) + \left( 1 - \frac{2 z^2}{L^2} \right) \right] \\
&= -C f x \left( 3 - \frac{4 y^2 + 2 z^2}{L^2} \right).
\end{align*}

\paragraph{Second component: \(\omega_2 = \partial_z u_x - \partial_x u_z\).}

\begin{align*}
\partial_z u_x &= \partial_z\left(y z \cdot C f \right)
= y C f \left( 1 - \frac{2 z^2}{L^2} \right), \\[1ex]
\partial_x u_z &= \partial_x\left(-2 x y \cdot C f \right)
= -2 y C f \left( 1 - \frac{2 x^2}{L^2} \right), \\[1ex]
\Rightarrow\quad
\omega_2 &= \partial_z u_x - \partial_x u_z
= C f y \left[ 1 - \frac{2 z^2}{L^2} + 2 \left(1 - \frac{2 x^2}{L^2} \right) \right] \\
&= C f y \left( 3 - \frac{2 z^2 + 4 x^2}{L^2} \right).
\end{align*}

\paragraph{Third component: \(\omega_3 = \partial_x u_y - \partial_y u_x\).}

\begin{align*}
\partial_x u_y &= \partial_x(x z \cdot C f) = z C f \left( 1 - \frac{2 x^2}{L^2} \right), \\[1ex]
\partial_y u_x &= \partial_y(y z \cdot C f) = z C f \left( 1 - \frac{2 y^2}{L^2} \right), \\[1ex]
\Rightarrow\quad
\omega_3 &= z C f \left( \frac{2 y^2 - 2 x^2}{L^2} \right)
= \frac{2 z C f}{L^2}(y^2 - x^2).
\end{align*}

\paragraph{Final result.} The full vorticity vector is:
\[
\vec{\omega}(x, y, z, t)
=
T^{\frac{\beta_x - \beta_t}{\beta_t}} \cdot \frac{e^{-r^2/L^2}}{L^2}
\begin{bmatrix}
-\,x \left( 3 - \dfrac{4 y^2 + 2 z^2}{L^2} \right) \\[1ex]
\;\;\;y \left( 3 - \dfrac{4 x^2 + 2 z^2}{L^2} \right) \\[1ex]
\;\;\;\;\;\dfrac{2 z}{L^2}(y^2 - x^2)
\end{bmatrix}.
\]

Choose isobaric weights: \( \beta_x = 5, \beta_t = 3 \Rightarrow \frac{\beta_x - \beta_t}{\beta_t} = \frac{2}{3} \)

Then:
\[
\vec{u}(x, y, z, t) = T^{2/3}
\begin{bmatrix}
\frac{y z}{L^2} e^{-\frac{x^2 + y^2 + z^2}{L^2}} \\
\frac{x z}{L^2} e^{-\frac{x^2 + y^2 + z^2}{L^2}} \\
\frac{-2 x y}{L^2} e^{-\frac{x^2 + y^2 + z^2}{L^2}}
\end{bmatrix}
\]

At \( t = 0 \), set \( T = 1, L = 1 \Rightarrow \vec{u}(x,y,z,0) = \vec{u}_0(x,y,z) \), so \( \vec{u}_0 \in \vec{u} \)

\subsection*{3. Vorticity Scaling}

The vorticity is:
\[
\vec{\omega}(x, y, z, t)
=
T^{\frac{2}{3}} \cdot \frac{e^{-r^2/L^2}}{L}
\begin{bmatrix}
	-\,(x/L) \left( 3 - \dfrac{4 y^2 + 2 z^2}{L^2} \right) \\[1ex]
	\;\;\;(y/L) \left( 3 - \dfrac{4 x^2 + 2 z^2}{L^2} \right) \\[1ex]
\;\;\;\;\;\dfrac{2 z}{L^3}(y^2 - x^2)
\end{bmatrix},
\]
where again, as in the previous example the vorticity factor is $T^{2/3}/L$ which does not grow at small scales; and the rest is an isobaric finction of weight zero, which does not rescale.
Thus, \( \vec{\omega}(x,y,z,t) \) and the vorticity are smooth, rapidly decaying (Schwartz), and bounded for all \( t \).

\subsection*{4. Pressure Field}

From the NSE:
\[
\Delta p = -\nabla \cdot \left( (\vec{u} \cdot \nabla)\vec{u} \right)
\]

The RHS is a divergence of a smooth Schwartz vector field, so the result is Schwartz. Thus:
\[
\Delta p = f(x, y, z, t), \quad f \in \mathcal{S}(\mathbb{R}^3)
\]

The pressure \( p(x, y, z, t) \) can be recovered using the fundamental solution of the Laplacian:
\[
p(x, y, z, t) = \frac{1}{4 \pi} \int_{\mathbb{R}^3} \frac{f(x', y', z', t)}{\sqrt{(x - x')^2 + (y - y')^2 + (z - z')^2}} \, dx' dy' dz'
\]

Since \( f \) is Schwartz, the pressure \( p \) is smooth and rapidly decaying.

\subsection*{5. Conclusion}

We have constructed a 3D smooth, divergence-free, Schwartz-class initial condition and embedded it into a self-similar solution of the incompressible NSE. The velocity, vorticity, and pressure fields are all smooth for all time, and the solution avoids blowup by satisfying the isobarity condition \( \beta_x / \beta_t > 3/2 \) as required in the article.

\section{Example of Isobaric Weight and Norm Behavior}

To visualize the crucial role of the isobaric weight ratio 
\(\beta_x/\beta_t>3/2\) in preventing scaling‐induced blowup, consider a simple self‐similar velocity field of the form
\[
\vec{u} = T^{\frac{\beta_x-\beta_t}{\beta_t}} \mathbf{F} \left( \frac{x}{L_1},\frac{y}{L_2} ,\frac{z}{L_3},\frac{t}{T}  \right) 
\]
where \(F\) is a fixed smooth profile (e.g.\ a Gaussian).  Under the standard flow scaling \(\alpha_x=1,\alpha_t=2\), one computes:

\[
\|\omega_k\|
\sim
k^{\,\frac{2\beta_x-3\beta_t}{\beta_t}},
\quad
\|u_k\|
\sim
k^{\,\frac{2(\beta_x-\beta_t)}{\beta_t}},
\quad
E_k \;=\;\|u_k\|
\sim
k^{\,\frac{4\beta_x-\beta_t}{\beta_t}},
\]
see Lemma \ref{L1}.

\vspace{0.5em}
\noindent\textbf{Table 1: Exponents vs.\ \(\beta_x/\beta_t\)}  
\[
\begin{array}{c|c|c|c}
r = \displaystyle\frac{\beta_x}{\beta_t}
& \omega\text{-exp} = 2r - 3
& u\text{-exp}    = 2(r - 1)
& E\text{-exp}    = 4r - 1 \\ \hline
-2.0 & -7.0 & -6.0 & -9.0 \\
-1.0 & -5.0 & -4.0 & -5.0 \\
-0.5 & -4.0 & -3.0 & -3.0 \\
 0.0 & -3.0 & -2.0 & -1.0 \\
 0.5 & -2.0 & -1.0 &  1.0 \\
 1.0 & -1.0 &  0.0 &  3.0 \\
 1.2 & -0.6 &  0.4 &  3.8 \\
 1.5 &  0.0 &  1.0 &  5.0 \\
 2.0 &  1.0 &  2.0 &  7.0 \\
 3.0 &  3.0 &  4.0 & 11.0
\end{array}
\]

When \(\beta_x/\beta_t>3/2\), the vorticity exponent \(2\beta_x-3\beta_t>0\) indicates decrease at fine scales under standard flow conditions, irrespective of the signs of $\beta_x, \beta_t$. In such regimes the combined energy and vorticity control, together with the Beale–Kato–Majda criterion, guarantees no finite‐time blowup. This simple numerical illustration clarifies why the threshold \(\beta_x/\beta_t=3/2\) is pivotal in the scaling analysis.

\vspace{0.5cm}
\end{document}